\documentclass[oneside,a4paper,12pt]{article}
\usepackage{mathtools}
\usepackage{amsmath}
\allowdisplaybreaks[4]
\usepackage{amsthm}
\usepackage{amssymb}
\usepackage{mathrsfs}
\usepackage[all]{xy}
\usepackage{tikz,tikz-cd}
\usepackage[bottom]{footmisc}
\usepackage{bm}
\usepackage{fancyhdr}%
\pagestyle{plain}  
\usepackage{geometry}
\usepackage{hyperref}
\usepackage{cite}
\usepackage{enumitem}
\usepackage{verbatim}
\setlist[enumerate]{label=(\arabic*)}

\newtheorem{thm}{Theorem}[section]
\newtheorem{cor}[thm]{Corollary}
\newtheorem{lem}[thm]{Lemma}
\newtheorem{prop}[thm]{Proposition}
\theoremstyle{definition}
\newtheorem{defn}[thm]{Definition}
\theoremstyle{remark}
\newtheorem{rem}[thm]{Remark}

\numberwithin{equation}{section}

\title{\textbf{RCD(0,\textit{N})-spaces with small linear diameter growth}}
\author{Xin Qian\footnote{School of Mathematical Sciences, Fudan University, Shanghai
		200433, People's Republic of China. e-mail: xqian22@m.fudan.edu.cn}}
 
\date{}
\usepackage{fancyhdr}
\pagestyle{fancy}
\fancyhf{}
\fancyhead{}
\fancyhead[LO]{RCD($0,N$)-spaces with small linear diameter growth}
\fancyhead[RO]{X.Qian}
\fancyfoot[C]{\thepage}
\providecommand{\keywords}[1]
{
	\small	
	\textbf{Keywords:} #1
}
\begin{document}
\maketitle
\begin{abstract}
In this paper, we study some structure properties on the fundamental group of RCD($0,N$) spaces.\ Our main result generalizes earlier work of Sormani \cite{sormani2000nonnegative} on Riemannian manifolds with nonnegative Ricci curvature and small linear diameter growth.\ We prove that the fundamental group is finitely generated if assuming small linear diameter growth on RCD($0,N$) spaces.
		 
			
\keywords{RCD($0,N$), finitely generated, fundamental group}
\end{abstract}

\section{Introduction} 
In recent years, the theory of RCD($K,N$) spaces has a remarkable development. After Lott-Villani \cite{lott2009ricci} and Sturm \cite{sturm2006geometry, sturm2006geometry2} introduced the curvature dimension condition CD($K,N$) independently, the notion of RCD($K,N$) spaces was proposed and analyzed in \cite{gigli2015differential, ambrosio2019nonlinear}, as a finite dimensional counterpart of RCD($K,\infty$) which was first introduced in \cite{ambrosio2014metric}.\ Roughly speaking, a CD($K,N$) space is a metric measure spaces $(X,d,m)$ with Ricci curvature bounded from below by $K\in\mathbb{R}$ and dimension bounded from above by $N\in[1,\infty]$, and RCD($K,N$) spaces are those infinitesimally Hilbertian CD($K,N$) spaces.\ One of the disadvantages of CD($K,N$) for finite $N$ is the lack of a local-to-global property.\ To this aim, Bacher and Sturm \cite{bacher2010localization} introduced the reduced  curvature dimension condition CD$^*$($K,N$) and as before, RCD$^*$($K,N$) spaces are infinitesimally Hilbertian CD$^*$($K,N$) spaces.\\
\indent The motivation of studying RCD($K,N$) (resp.\ RCD$^*$($K,N$)) spaces is to single out the “Riemannian” class in CD($K,N$) (resp.\ CD$^*$($K,N$)) spaces, which excludes Finsler manifolds.\ Therefore, it is natural to expect that some analytical and topological properties of Riemannian manifolds also hold on RCD($K,N$) or RCD$^*$($K,N$) spaces.\\
\indent It is well-known that for Riemannian manifolds, the Ricci curvature controls the fundamental group very well.\ In this area, one famous open problem in the past was that an open manifold with nonnegative Ricci curvature has a finitely generated fundamental group, conjectured by Milnor \cite{milnor1968note} in 1968.\ This conjecture was of great interests until Bru\`e-Naber-Semola \cite{brue2023fundamental} constructed a counterexample $M^{7}$ with $Ric\geqslant0$ such that $\pi_{1}(M^{7})=\mathbb{Q}/\mathbb{Z}$.\ However, it is still worth studying under what conditions Milnor Conjecture holds.\ Some results in this direction have been accomplished by Anderson \cite{anderson1990topology}, Li \cite{li1986large} and Sormani \cite{sormani2000nonnegative} among others.\ Anderson and Li proved that if a manifold with $Ric\geqslant0$ has Euclidean volume growth, then the fundamental group is finite and this result has been extended to an RCD($0,N$) space $(X,d,m)$ by Mondino-Wei \cite{mondino2019universal}.\ A key technical point to get information on the fundamental group $\pi_{1}(X)$ is to study the universal cover and the group of deck transformations, denoted by $G(X)$, which is a quotient of the fundamental group $\pi_{1}(X)$ and called revised fundamental group in \cite{sormani2001hausdorff}.\\
\indent Moreover, in a more recent paper \cite{wang2023rcd}, Wang proved that any RCD($K,N$) space $(X,d,m)$ is semi-locally simply connected, which implies that the universal cover $\tilde{X}$ is simply connected and $G(X)$ is isomorphic to $\pi_{1}(X)$.\ Hence, the structure properties derived by Mondino-Wei \cite{mondino2019universal} on $G(X)$ hold on the fundamental group $\pi_{1}(X)$.\ We will review this point in the next section.

In this paper we extend the result of Sormani \cite{sormani2000nonnegative} to RCD($0,N$) spaces and the main theorem is stated as following. 
\begin{thm}\label{thm1.1}
   Let $(X,d,m)$ be an RCD(0,N) space for some $N\in[1,\infty)$.\ Then,
   \begin{enumerate}
    \item If $N>2$, and X has small linear diameter growth, i.e.,
    \[
    \limsup_{r \to \infty}
     \frac{diam(\partial B_{r}(p))}{r}
    < 4S_N,
    \]	
    where \[ S_N =
    \dfrac{N}{4(N-1)} \dfrac{1}{3^N} \left(\dfrac{N-2}{N-1}\right)^{N-1},\]
    then the fundamental group $\pi_{1}(X)$ is finitely generated.
    \item If $N=2$, then the fundamental group $\pi_{1}(X)$ is finitely generated.
    \item If $1\leqslant N<2$, then the fundamental group $\pi_{1}(X)$ is trivial.
    \end{enumerate}
\end{thm}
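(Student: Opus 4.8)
I would handle \emph{part (2)} first, since it needs no hypothesis on diameter growth. For $1\leqslant N<2$ an RCD$(0,N)$ space has essential --- hence topological --- dimension at most one, and by the classification of low-dimensional RCD spaces $X$ is then simply connected (indeed a point or an interval of $\mathbb{R}$ with a suitable weight), so $\pi_1(X)$ is trivial; here I would simply invoke the relevant structure result. The rest of the plan concerns part (1), which I would prove by adapting Sormani's argument. Let $\pi\colon(\tilde X,\tilde d,\tilde m)\to(X,d,m)$ be the revised universal cover with deck group $G=G(X)$ acting freely, isometrically and properly discontinuously; fix a lift $\tilde p$ of $p$, and note $(\tilde X,\tilde d,\tilde m)$ is again RCD$(0,N)$ (covers of RCD spaces being RCD, by the local-to-global property). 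If $X$ is compact then $G$ acts cocompactly on the proper geodesic space $\tilde X$ and is finitely generated by the Milnor--\v{S}varc lemma, so assume $X$ (hence $\tilde X$) non-compact and fix a ray $\rho\colon[0,\infty)\to X$ from $p$. I would enumerate generators greedily: take $g_1\in G\setminus\{e\}$ of minimal displacement $|g|:=\tilde d(\tilde p,g\tilde p)$ (a positive minimum, attained by proper discontinuity), and inductively take $g_{k+1}$ of minimal displacement in $G\setminus\langle g_1,\dots,g_k\rangle$, writing $L_k:=|g_k|$. If the process stops, $G$ is finitely generated and we are done; otherwise $L_1\leqslant L_2\leqslant\cdots\to\infty$, since only finitely many elements have bounded displacement. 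For each $k$ pick a unit-speed loop $\gamma_k\colon[0,L_k]\to X$ based at $p$ whose lift from $\tilde p$ is a minimal geodesic from $\tilde p$ to $g_k\tilde p$.

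Next I would establish the two technical inputs. The \emph{Halfway Lemma} asserts $d_X\!\big(p,\gamma_k(L_k/2)\big)=L_k/2$ for every $k$, so both halves of $\gamma_k$ are minimal in $X$: were this false, a minimal geodesic $\tau$ from $p$ to $\gamma_k(L_k/2)$ would cut $\gamma_k$ into the two based loops $\gamma_k|_{[0,L_k/2]}\cdot\bar\tau$ and $\tau\cdot\gamma_k|_{[L_k/2,L_k]}$, each of length $<L_k$, whose images in $G$ multiply to $g_k\notin\langle g_1,\dots,g_{k-1}\rangle$; then one of them has displacement $<L_k$ yet lies outside $\langle g_1,\dots,g_{k-1}\rangle$, contradicting the choice of $g_k$. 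The analytic input is the Abresch--Gromoll excess inequality on RCD$(0,N)$ spaces: for $y_0,y_1,x\in\tilde X$, with $h$ the distance from $x$ to a minimal geodesic between $y_0,y_1$ and $h$ small compared with $\ell:=\min_i\tilde d(x,y_i)$, the excess $e(x):=\tilde d(x,y_0)+\tilde d(x,y_1)-\tilde d(y_0,y_1)$ satisfies a bound $e(x)\leqslant c_N\,h\,(h/\ell)^{1/(N-1)}$; this follows from the Laplacian comparison theorem and the maximum principle on RCD spaces exactly as in the smooth case, and the constant $c_N$ is what produces $S_N$.

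Then I would run the \emph{Uniform Cut} step. Combining the two inputs as in Sormani's argument, one shows that for each $k$ the midpoint $C_k:=\gamma_k(L_k/2)\in\partial B_{L_k/2}(p)$ must lie far from the ray: quantitatively $d_X\!\big(C_k,\rho(L_k/2)\big)\geqslant(2S_N-o(1))L_k$ as $k\to\infty$ --- for otherwise, splicing a sub-segment of $\rho$ into $\gamma_k$ with the saving controlled by the excess bound would yield a loop of length $<L_k$ representing a class outside $\langle g_1,\dots,g_{k-1}\rangle$, again contradicting minimality. (A variant pairs two distinct generators of comparable displacement rather than a generator and the ray; and one may need the Cheeger--Gromoll splitting theorem for RCD$(0,N)$ to dispose of the degenerate case where $\tilde X$ contains a line.) Since $C_k$ and $\rho(L_k/2)$ both lie on $\partial B_{L_k/2}(p)$ and $L_k\to\infty$, this forces $\limsup_{r\to\infty}diam(\partial B_r(p))/r\geqslant 4S_N$, contradicting the hypothesis; hence the greedy process terminates and $G(X)$ is finitely generated.

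The combinatorial skeleton (greedy generators and the Halfway Lemma) transfers from the manifold setting with essentially no change. The real obstacle is twofold: (i) securing the Abresch--Gromoll excess estimate in the RCD$(0,N)$ category with an explicit, usable constant; and (ii) carrying out the Uniform Cut argument so that the bookkeeping produces \emph{precisely} the threshold $4S_N$ with its intricate dependence on $N$ --- in particular the separate treatment of $N=2$, where the model coefficients degenerate (the general formula gives $0$) and the value $1/25$ must be extracted by a direct estimate --- all while keeping careful track of basepoints and of the interaction between conjugation and the subgroups $\langle g_1,\dots,g_k\rangle$.
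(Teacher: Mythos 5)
Your outline coincides with the paper's proof in all of its main components: part (2) via the classification of RCD$^*(K,N)$ spaces with $N\in[1,2)$, and part (1) via greedily chosen independent generators, the Halfway Lemma (your proof of which is exactly the paper's), the Abresch--Gromoll excess estimate on the RCD$(0,N)$ universal cover, and the final contradiction obtained from two points of $\partial B_{L_k/2}(p)$ at mutual distance $\geqslant 2S_NL_k=4S_N\cdot(L_k/2)$. Your treatment of the compact case by Milnor--\v{S}varc is a harmless substitute for the paper's appeal to the Cheeger--Gromoll-type structure theorem for compact RCD$(0,N)$ spaces.

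The one step whose mechanism you misdescribe --- and which, as literally stated, would fail --- is the Uniform Cut estimate. If $C_k=\gamma_k(L_k/2)$ were within $3S_NL_k$ of a point $x$ with $d(p,x)=(\tfrac{1}{2}+S_N)L_k$, splicing a segment of the ray (or of a geodesic to $x$) into $\gamma_k$ does \emph{not} produce a based loop of length $<L_k$: the spliced loop has length roughly $(1+2S_N)L_k>L_k$, so no contradiction with the minimality of $g_k$ arises this way, and there is no ``saving'' for the excess bound to control. The actual argument (Sormani's, reproduced in the paper's Uniform Cut Lemma) is a two-sided bound on the excess of the lift $\tilde x$ of $x$ with respect to the endpoints $\tilde p$ and $g_k\tilde p$ of $\tilde\gamma_k$: since the covering projection is distance non-increasing, $\tilde d(\tilde x,\tilde p)$ and $\tilde d(\tilde x,g_k\tilde p)$ are both $\geqslant(\tfrac{1}{2}+S_N)L_k$ while $\tilde d(\tilde p,g_k\tilde p)=L_k$, so $e(\tilde x)\geqslant 2S_NL_k$ from below; on the other hand the height of $\tilde x$ over $\tilde\gamma_k$ is $<3S_NL_k$ and $l-h>L_k/4$, so Abresch--Gromoll bounds $e(\tilde x)$ from above by a quantity that is $<2S_NL_k$ precisely because $S_N$ is chosen as in the statement (with the separate logarithmic computation at $N=2$). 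No loop is shortened anywhere in this step; the contradiction is purely metric and takes place in the cover. With that substitution your plan reproduces the paper's proof.
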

\indent To prove this result, we extend the Halfway Lemma and Uniform Cut Lemma established by Sormani in \cite{sormani2000nonnegative} to a non-smooth setting and the spirit of the proof is similar.\ Let us point out that Kitabeppu and Lakzian proved a similar result in \cite{kitabeppu2015non}, under additional non-branching and semi-locally simple connectedness assumptions.\ In our argument, we drop the non-branching assumption by carefully applying the excess estimate established for RCD($K,N$) spaces in \cite{gigli2014abresch}.\ It can be seen in the proof that the excess estimate can prevent the structure of an RCD($0,N$) space being too wild.\ Also, the semi-locally simple connectedness is necessary and sufficient for the existence of a simply connected universal cover, but by Wang's result \cite{wang2023rcd}, we can directly drop this assumption.\\
\indent Furthermore, we prove the finite generation of $\pi_{1}(X)$ without assuming small linear diameter growth in the case $N=2$, which differs from Kitabeppu and Lakzian's result in \cite{kitabeppu2015non}.\ Our results are more similar to the situation on manifolds.\ Notice that for 2-dimensional manifolds, $Ric\geqslant0$ is equivalent to $sec\geqslant0$, which implies that the the fundamental group is finitely generated (see \cite{gromov1978manifolds}).\\
\indent Also, since the Ricci-limit space is RCD space, it is straightforward to obtain the following corallary.
    \begin{cor}\label{cor1.2}
      Let $(M_{i}^{n},g_{i},p_{i})$ be a sequence of n-dimensional Riemannian manifold with $Ric_{g_{i}}\geqslant-\delta_{i}\to0$ and $(M_{i}^{n},g_{i},p_{i})\xrightarrow{pGH}(X,d,p)$.\ Then,
      \begin{enumerate}
      	\item If $n\geqslant3$, and X has small linear diameter growth, i.e.,
      	\[
      	\limsup_{r \to \infty}
      	\frac{diam(\partial B_{r}(p))}{r}
      	< 4S_n,
      	\]	
      	where $S_n=\dfrac{n}{4(n-1)} \dfrac{1}{3^n} \left(\dfrac{n-2}{n-1}\right)^{n-1}$, then the fundamental group $\pi_{1}(X)$ is finitely generated.
      	\item If $n=2$, then the fundamental group $\pi_{1}(X)$ is finitely generated.
      \end{enumerate}
    \end{cor}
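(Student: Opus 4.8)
The plan is to reduce the corollary to Theorem~\ref{thm1.1}, by identifying the Ricci limit $X$ as an RCD($0,n$) space and using Wang's theorem to pass from the revised fundamental group back to $\pi_1$. First I would upgrade the given pointed Gromov--Hausdorff convergence to a pointed measured one: equip each $M_i^n$ with the renormalized Riemannian measure $\mathfrak m_i:=\mathrm{vol}_{g_i}/\mathrm{vol}_{g_i}(B_1(p_i))$, so that the uniform Bishop--Gromov inequality coming from $Ric_{g_i}\geqslant-\delta_i$ guarantees that, along a subsequence, $(M_i^n,g_i,\mathfrak m_i,p_i)$ converges in the pmGH sense to some $(X,d,\mathfrak m,p)$. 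Since each $(M_i^n,g_i,\mathfrak m_i)$ is an RCD($-\delta_i,n$) space and $\delta_i\to 0$, the stability of the RCD condition under pmGH convergence gives that $(X,d,\mathfrak m)$ is an RCD($0,n$) space. Finally, as $X$ is a Ricci limit space, Wang \cite{wang2021ricci} shows that it is semi-locally simply connected, hence its universal cover is simply connected and $G(X)\cong\pi_1(X)$.

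With this in place, part (1) is immediate: for $n\geqslant 3$ apply Theorem~\ref{thm1.1}(1) with $N=n$. Since $n>2$, the constant $S_N$ of Theorem~\ref{thm1.1} is the one given by the $N>2$ branch, which is precisely the $S_n$ appearing in the corollary, and the hypothesis $\limsup_{r\to\infty}\operatorname{diam}(\partial B_r(p))/r<4S_n$ is exactly the small linear diameter growth assumption of the theorem. Hence $G(X)$ is finitely generated, and therefore so is $\pi_1(X)$. (If $X$ is compact the limsup equals $0<4S_n$, so the same reasoning applies, or one may directly appeal to the finite generation of $\pi_1$ for compact RCD spaces.)

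For part (2), i.e. $n=2$, there is no diameter hypothesis, so Theorem~\ref{thm1.1}(1) is not available — for instance $X=\mathbb R^2$ has linear diameter growth $2>4S_2=\tfrac{4}{25}$ — and a separate, classical argument is needed. Here I would use that on a surface $Ric_{g_i}\geqslant-\delta_i$ is equivalent to $\sec_{g_i}\geqslant-\delta_i$, so each $(M_i^2,g_i)$ is an Alexandrov space of curvature $\geqslant-\delta_i$; by the stability of lower curvature bounds under Gromov--Hausdorff convergence, $X$ is an Alexandrov space of curvature $\geqslant 0$ and dimension at most $2$, hence a topological manifold of dimension $\leqslant 2$ (possibly with boundary). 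If $\dim X\leqslant 1$ then $X$ is a point, an interval, a ray, a line or a circle, so $\pi_1(X)$ is trivial or $\mathbb Z$. If $\dim X=2$ and $X$ is compact, then $X$ is a closed surface and $\pi_1(X)$ is finitely generated by the classification of surfaces. If $\dim X=2$ and $X$ is noncompact, the soul theorem for nonnegatively curved Alexandrov spaces (entirely elementary in dimension two, e.g. via Cohn--Vossen) shows that $X$ deformation retracts onto a compact soul of dimension $\leqslant 1$, so again $\pi_1(X)$ is trivial or $\mathbb Z$. In every case $\pi_1(X)$ is finitely generated.

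I expect the only real subtlety to lie in the first step and in the separate treatment of $n=2$: one must check that the measured limit exists and is genuinely RCD($0,n$) with dimension parameter equal to $n$ (so that the constant in the corollary is the correct $S_n$), that Wang's semi-local simple connectedness result — the ingredient that legitimizes replacing $G(X)$ by $\pi_1(X)$ — applies to limits of manifolds satisfying only $Ric_{g_i}\geqslant-\delta_i\to 0$ rather than $Ric_{g_i}\geqslant 0$, and that in dimension $2$ one argues through the Alexandrov structure of $X$ in place of a diameter bound. Granting these, part (1) follows from Theorem~\ref{thm1.1}(1) and part (2) from two-dimensional comparison geometry.
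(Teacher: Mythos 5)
Your proposal is correct. Part (1) follows the paper exactly: renormalize $\mathrm{vol}_{g_i}$ by $\mathrm{vol}_{g_i}(B_1(p_i))$, pass to a pmGH limit, invoke stability (Proposition \ref{prop2.5}) to get that $(X,d,m)$ is RCD($0,n$), use Wang's Theorem \ref{thm2.17} to identify $G(X)$ with $\pi_1(X)$, and apply Theorem \ref{thm1.1}. For part (2) you take a genuinely different route. The paper also observes that $X$ is a nonnegatively curved Alexandrov space of Hausdorff dimension at most $2$, but then runs Gromov's short-generator argument on the universal cover: it picks a regular point $\tilde x$ (space of directions isometric to $S^1$), chooses generators $g_k$ of increasing displacement, and uses triangle comparison in $\mathbb{R}^2$ to show that the minimal geodesics from $\tilde x$ to $g_k\tilde x$ and $g_l\tilde x$ make angle at least $\pi/3$, whence there are at most $6$ generators. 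You instead argue topologically on $X$ itself, via the classification of surfaces in the compact case and a soul/Cohn--Vossen retraction in the noncompact case. Both arguments are sound; the paper's stays purely metric-geometric (no appeal to the manifold structure of $2$-dimensional Alexandrov spaces, and it yields the explicit bound of $6$ generators), while yours identifies the actual isomorphism type of $\pi_1(X)$ but leans on the topological regularity of two-dimensional Alexandrov spaces and on the existence of a deformation retraction onto the soul, facts that are true but heavier to cite carefully (including the issue of whether the limit can acquire boundary). Your flagged concerns are all resolved as you expect: Wang's theorem is stated for $Ric\geqslant-(n-1)$, so it covers $Ric_{g_i}\geqslant-\delta_i$ once $\delta_i\leqslant n-1$.
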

\indent Moreover, if an RCD($0,N$) space has linear volume growth, then by Huang \cite{huang2020almost}, its diameter growth is sublinear, which was proved on manifolds by Sormani \cite{sormani2000almost}. Thus, we get the following corollary.
\begin{cor}\label{cor1.3}
	Let $(X,d,m)$ be an RCD(0,N) space with $m(B_{r}(p))\leqslant Cr$ for some positive constant $C$ and $N\in[1,\infty)$.\ Then the fundamental group $\pi_{1}(X)$ is finitely generated.
\end{cor}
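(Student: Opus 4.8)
The plan is to deduce this directly from Theorem~\ref{thm1.1} by converting the linear volume growth hypothesis into the small linear diameter growth hypothesis; I would distinguish two cases according to the value of $N$.

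If $1\leqslant N<2$, I would simply invoke Theorem~\ref{thm1.1}(2): $\pi_{1}(X)$ is trivial, and since $G(X)$ is a quotient of $\pi_{1}(X)$, the revised fundamental group is trivial as well, hence finitely generated. (No volume growth hypothesis is needed in this range.)

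If $N\geqslant2$, I would appeal to the theorem of Huang~\cite{huang2020almost}, which asserts that an RCD($0,N$) space with $m(B_{r}(p))\leqslant Cr$ has sublinear diameter growth, i.e.
\[
\lim_{r\to\infty}\frac{diam(\partial B_{r}(p))}{r}=0
\]
(the compact case being trivial, since then $diam(\partial B_{r}(p))\leqslant diam(X)$ is bounded). Because $S_{N}>0$ for every $N\geqslant2$, this yields
\[
\limsup_{r\to\infty}\frac{diam(\partial B_{r}(p))}{r}=0<4S_{N},
\]
so $X$ has small linear diameter growth in the sense of Theorem~\ref{thm1.1}(1), and that theorem gives that $G(X)$ is finitely generated.

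The argument is entirely formal given the cited inputs; the only point I would check carefully — and the sole (minor) obstacle — is that Huang's result applies verbatim in this generality and produces the $o(r)$ bound on $diam(\partial B_{r}(p))$ phrased in terms of the same distance spheres and metric balls used in Theorem~\ref{thm1.1}. Once that compatibility is confirmed, no further work is needed.
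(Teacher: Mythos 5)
Your reduction has a genuine gap in the case $N\geqslant 2$: the implication ``linear volume growth $\Rightarrow$ sublinear diameter growth'' is \emph{not} true in general, and the point you flag as a minor compatibility check is in fact where the argument breaks. Huang's result, as recorded in Theorem~\ref{thm2.8} of the paper, carries the extra hypothesis that $(X,d,m)$ \emph{does not split}; without it the conclusion fails. The flat cylinder $S^{1}(1)\times\mathbb{R}$ is a counterexample: it is RCD($0,2$) with $m(B_{r}(p))\leqslant Cr$, yet for large $r$ the sphere $\partial B_{r}(p)$ contains points near both ends of the cylinder, so $diam(\partial B_{r}(p))/r\to 2$, which is far larger than $4S_{2}=4/25$. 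Hence the small linear diameter growth hypothesis of Theorem~\ref{thm1.1}(1) is simply not available for splitting spaces, and your appeal to that theorem does not go through.

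The paper's proof therefore argues in two cases. If $X$ does not split, Theorem~\ref{thm2.8} applies and the argument is exactly yours. If $X$ splits, Theorem~\ref{thm2.7} gives $X\cong X'\times\mathbb{R}$ with $X'$ an RCD($0,N-1$) space; $X'$ must be compact, since otherwise Theorem~\ref{thm2.9} would force $m'(B_{r}(x'))\geqslant Cr$ and hence $m(B_{r}(x))\geqslant Cr^{2}$, contradicting linear volume growth. One then assumes $G(X)$ is infinitely generated and applies the Halfway Lemma (Lemma~\ref{lem3.2}) to produce minimal representative loops $\gamma_{k}$ of lengths $d_{k}\to\infty$ with $d(\gamma_{k}(0),\gamma_{k}(d_{k}/2))=d_{k}/2$; minimality forces each $\gamma_{k}$ to lie in a slice $X'\times\{0\}$, so compactness of $X'$ contradicts $d_{k}\to\infty$. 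You need to supply this second case (or an equivalent treatment of splitting spaces) for the corollary to be proved. Your handling of $1\leqslant N<2$ via Theorem~\ref{thm1.1}(2) is consistent with how the paper uses its own results.
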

\indent The paper is organized as follows. In section \ref{2}, we recall the definition of lower Ricci curvature bounds on metric measure spaces and review some basic properties and useful results. In particular, we will discuss Mondino and Wei's work on the universal cover of RCD($K,N$) spaces \cite{mondino2019universal}. In section \ref{3}, we generalize Sormani's technical lemmas to RCD($0,N$) spaces, and the proof of Theorem \ref{thm1.1}, Corollary \ref{cor1.2} and \ref{cor1.3} are presented in section \ref{4}.

\section{Preliminaries}\label{2}
Throughout this paper, $(X,d,m)$ is a metric measure space (mms. for short) where $(X,d)$ is a complete and separable geodesic metric space and $m$ is a locally finite nonnegative Borel measure with supp$m=X$.\ We also assume $X$ is not a point.
\subsection{RCD$^*$($K,N$) spaces}	
	In this subsection, we recall some basic definitions and properties of metric measure spaces with lower Ricci curvature bounds.\\
\indent We denote by $\mathcal{P}(X)$ the set of Borel probability measures on $(X,d)$ and by $\mathcal{P}_{2}(X)\subset\mathcal{P}(X)$ the subset of probability measures with finite second moment, i.e.
	\[\mathcal{P}_{2}(X)=\left\lbrace \mu\in\mathcal{P}(X):\int_{X}d^{2}(\cdot,x_{0})d\mu<\infty,\  \text{for some}\ x_{0}\in X\right\rbrace.\]
For $\mu_{0},\mu_{1}\in\mathcal{P}_{2}(X)$, the $W_{2}$-distance $W_{2}(\mu_{0},\mu_{1})$ is defined by
	\begin{align}
		W_{2}^{2}(\mu_{0},\mu_{1})=\inf\limits_{\alpha}\int_{X\times X}d^{2}(x,y)d\alpha(x,y) \label{eq2.1}
	\end{align}
where the infimum is taken over all $\alpha\in\mathcal{P}(X\times X)$ such that $\pi_{\#}^1\alpha=\mu_{0},\pi_{\#}^2\alpha=\mu_{1}$.\\
\indent Note that $(\mathcal{P}_{2}(X),W_{2})$ is a geodesic space provided that $(X,d)$ is a geodesic space.\ Then we define the evaluation map $e_{t}:C([0,1];X)\to X$ as
          $$e_{t}(\gamma):=\gamma_{t},\qquad \forall\gamma\in C([0,1];X).$$
We will denote by Geo($X$) ($\subset\!C([0,1];X)$) the space of (constant speed minimizing) geodesics on $(X,d)$ endowed with the sup distance.\\ 
\indent Given $\mu_{0},\mu_{1}\in\mathcal{P}_{2}(X)$, let OptGeo($\mu_{0},\mu_{1}$) be the space of all $\pi\in\mathcal{P}$(Geo($X$)) for which $(e_{0},e_{1})_{\#}\pi$ is a minimizer in \eqref{eq2.1} and recall that $(t\to\mu_{t})_{t\in[0,1]}$ is in Geo($\mathcal{P}_{2}(X)$) if and only if there exists $\pi\in$OptGeo($\mu_{0},\mu_{1}$) such that $(e_{t})_{\#}\pi=\mu_{t}$ for all $t\in[0,1]$.\\
\indent Now, let us introduce the so-called reduced curvature dimension condition CD$^*$($K,N$), coming from \cite{bacher2010localization}. For $K,N\in\mathbb{R}$ with $N\geqslant0$, and $(t,\theta)\in[0,1]\times\mathbb{R}_{+}$, we set
\begin{align}
\sigma_{K,N}^{(t)}(\theta):=
\begin{cases}
	\infty & \text{if } K\theta^{2}\geqslant N\pi^{2}\ \text{and}\ K\theta^{2}>0,\\
	\dfrac{\sin(t\theta\sqrt{K/N})}{\sin(\theta\sqrt{K/N})} & \text{if } 0<K\theta^{2}<N\pi^{2},\\
	t  & \text{if } K\theta^{2}<0\ \text{and}\ N=0,\ \text{or if}\ K\theta^{2}=0, \\
	\dfrac{\sinh(t\theta\sqrt{-K/N})}{\sinh(\theta\sqrt{-K/N})} & \text{if } K\theta^{2}<0\ \text{and}\ N>0,
\end{cases}
\end{align}
and let
\begin{align}
	\tau_{K,N}^{(t)}(\theta):=t^{1/N}\sigma_{K,N-1}^{(t)}(\theta)^{(N-1)/N},
\end{align}
for all $K\in\mathbb{R}, N\in[1,\infty), (t,\theta)\in[0,1]\times\mathbb{R}_{+}$.
\begin{defn}[Reduced curvature dimension condition]\label{defn2.1}
	Let $K\in\mathbb{R}$ and $N\in[1,\infty)$.\ We say that a mms.\ $(X,d,m)$ is a CD$^*$($K,N$) space if for any two measures $\mu_{0},\mu_{1}\in\mathcal{P}(X)$ with bounded support,\ there exists a measure $\pi\in$ OptGeo($\mu_{0},\mu_{1}$) such that for any $t\in[0,1]$ and $N'\geqslant N$, we have
	\begin{align}
		\int\rho_{t}^{1-\frac{1}{N'}}dm\geq\int\sigma_{K,N'}^{(1-t)}(d(\gamma_{0},\gamma_{1}))\rho_{0}^{-\frac{1}{N'}}(\gamma_{0})+\sigma_{K,N'}^{(t)}(d(\gamma_{0},\gamma_{1}))\rho_{1}^{-\frac{1}{N'}}(\gamma_{1}) d\pi(\gamma)\label{eq2.4}
	\end{align}
where we have written $(e_{t})_{\#}\pi=\rho_{t}m+\mu_{t}^{s}$ with $\mu_{t}^{s}\perp m$, for all $t\in[0,1]$.
\end{defn}
\begin{rem}\label{rem2.2}
	On a CD$^*$($K,N$) space $(X,d,m)$, a natural version of Bishop-Gromov volume comparison holds (see \cite{bacher2010localization} for precise statement) and this leads to the properness of $(X,d,m)$ (i.e.\ closed bounded sets in $X$ are compact).
\end{rem}
\begin{rem}\label{rem2.3}
	The original curvature dimension condition, denoted by CD($K,N$), has the same definition as CD$^*$($K,N$) except that the coefficients $\sigma_{K,N'}^{(1-t)}(d(\gamma_{0},\gamma_{1}))$ and $\sigma_{K,N'}^{(t)}(d(\gamma_{0},\gamma_{1}))$ in \eqref{eq2.4} are replaced by $\tau_{K,N'}^{(1-t)}(d(\gamma_{0},\gamma_{1}))$ and $\tau_{K,N'}^{(t)}(d(\gamma_{0},\gamma_{1}))$ respectively. In general, CD$^*$($K,N$) is weaker than CD($K,N$), while in the case $K=0$ which we will mainly consider in this paper, these two notions are identical.
\end{rem}
\indent It is possible to see that Finsler manifolds are allowed as CD$^*$($K,N$) spaces.\ In order to single out the “Riemannian class”, the CD$^*$($K,N$) condition might be strengthened by requiring additionally that the Sobolev space $W^{1,2}(X,d,m)$ is Hilbert, inspired by the fact that a smooth Finsler manifold is Riemannian if and only if the space $W^{1,2}$ is Hilbert.\ We briefly review the definition of Sobolev space for mms.\ $(X,d,m)$ below.\\
\indent Recall that the Cheeger energy $Ch:L^{2}(X,m)\to[0,\infty]$ is defined through
\begin{align}
	Ch(f):=\inf\left\lbrace \liminf\limits_{n\to\infty}\int_{X}(\text{lip}f_{n})^{2}dm: f_{n}\in\text{Lip}_{b}(X)\cap L^{2}(X,m),\ f_{n}\xrightarrow{L^{2}}f\right\rbrace \label{eq2.5}
\end{align}
where, $\text{lip}f(x):=\limsup\limits_{y\to x}\frac{|f(x)-f(y)|}{d(x,y)}$.
Then, we define
\begin{align}
	W^{1,2}(X)=W^{1,2}(X,d,m):=\left\lbrace f\in L^{2}(X):\ Ch(f)<\infty\right\rbrace.
\end{align}
By looking at the optimal approximating sequence in \eqref{eq2.5}, one can identify a canonical object $|\nabla f|$, called minimal relaxed gradient, which provides the integral presentation
\[Ch(f)=\int_{X}|\nabla f|^{2}dm,\qquad \forall f\in W^{1,2}(X).\] See \cite{ambrosio2014calculus} for more details on this topic.\\
\indent The Sobolev space $W^{1,2}(X)$ endowed with the norm $||f||_{W^{1,2}}^{2}=||f||_{L^{2}}^{2}+Ch(f)$ is a Banach space, but it is not Hilbert in general.\ If $W^{1,2}(X,d,m)$ is Hilbert, then we say that $(X,d,m)$ is infinitesimally Hilbertian.
\begin{defn}\label{defn2.4}
An RCD$^*$($K,N$) (resp.\ RCD($K,N$)) space $(X,d,m)$ is an infinitesimally Hilbertian CD$^*$($K,N$) (resp.\ CD($K,N$)) space.
\end{defn}
\noindent A basic property of RCD$^*$($K,N$) spaces is the stability under pointed measured Gromov-Hausdorff (pmGH for short) convergence.\ See \cite{gigli2015convergence, erbar2015equivalence} for a proof.
\begin{prop}[Stability]\label{prop2.5}
	Let $K\in\mathbb{R}\ and\ N\in[1,\infty)$.\ If $((X_{n},d_{n},m_{n}))_{n\in\mathbb{N}}$ is a sequence of RCD$^*$($K,N$) spaces with $(X_{n},d_{n},m_{n})\xrightarrow{pmGH}(X,d,m)$, then $(X,d,m)$ is also an RCD$^*$($K,N$) space.
\end{prop}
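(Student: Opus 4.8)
The plan is to establish separately the two properties defining an RCD$^*$($K,N$) space: stability of the metric--measure condition CD$^*$($K,N$) under pmGH convergence, and stability of infinitesimal Hilbertianity. As a common framework, fix a complete separable metric space $(Z,d_{Z})$ into which all $(X_{n},d_{n})$ and $(X,d)$ embed isometrically and which realizes the pmGH convergence, so that $m_{n}\rightharpoonup m$ weakly in duality with bounded continuous functions with bounded support; probability measures, optimal geodesic plans, and Sobolev functions on the various spaces are then all compared inside $Z$. This is precisely the setup of \cite{gigli2015convergence, erbar2015equivalence, ambrosio2014metric}, and I only indicate the main points.

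For CD$^*$($K,N$): given $\mu_{0},\mu_{1}\in\mathcal{P}(X)$ with bounded support, pick approximations $\mu_{i}^{n}\in\mathcal{P}(X_{n})$ with $W_{2}(\mu_{i}^{n},\mu_{i})\to0$, chosen with densities $\rho_{i}^{n}$ so that the endpoint terms in \eqref{eq2.4} converge. By CD$^*$($K,N$) on $X_{n}$ there is $\pi_{n}\in$ OptGeo($\mu_{0}^{n},\mu_{1}^{n}$) satisfying \eqref{eq2.4} for every $N'\geq N$. Since the supports of $\mu_{i}^{n}$ remain in a fixed bounded region and the spaces are proper (Remark \ref{rem2.2}), the family $(\pi_{n})$ is tight on Geo($Z$), so a subsequence converges weakly to some $\pi\in$ OptGeo($\mu_{0},\mu_{1}$). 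One then passes to the limit in \eqref{eq2.4}: the right-hand side converges by continuity of the coefficients $\sigma_{K,N'}^{(t)}$ in $d(\gamma_{0},\gamma_{1})$ and the arranged convergence of endpoint densities, while the left-hand side is controlled by the upper semicontinuity of the R\'enyi-type functional $\mu=\rho m+\mu^{s}\mapsto\int\rho^{1-1/N'}\,dm$ along the weakly converging interpolants $(e_{t})_{\#}\pi_{n}$; the non-compactness is absorbed by localizing to balls, which is legitimate since Bishop--Gromov makes the measures locally uniformly bounded. Taking $N'=N$ yields CD$^*$($K,N$) for $(X,d,m)$.

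For infinitesimal Hilbertianity: pmGH convergence together with the uniform lower Ricci bound and Bishop--Gromov gives Mosco convergence of the Cheeger energies $Ch_{n}$ (extended by $+\infty$ off $L^{2}(m_{n})$) to $Ch$, in the sense of $L^{2}$-convergence on varying spaces developed in \cite{gigli2015convergence, ambrosio2014metric}. Mosco convergence of the energies forces strong $L^{2}$-convergence of the associated heat flows; since each $(X_{n},d_{n},m_{n})$ is infinitesimally Hilbertian its heat flow is linear, and a strong limit of linear semigroups is linear, so the heat flow of $(X,d,m)$ is linear. This in turn forces $Ch$ to be a quadratic form, i.e.\ $(X,d,m)$ is infinitesimally Hilbertian. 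Together with the previous paragraph, $(X,d,m)$ is an RCD$^*$($K,N$) space.

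The genuinely delicate point is the pointed, non-compact setting throughout: making simultaneous sense of the convergence of the reference measures, of optimal geodesic plans, and of Sobolev functions living on different spaces, and controlling the regions near infinity. This is exactly what the machinery of \cite{gigli2015convergence, erbar2015equivalence, ambrosio2014metric} is built to handle, and invoking it both stability statements go through; we refer to those works for the complete argument.
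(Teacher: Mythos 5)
The paper offers no proof of this proposition at all---it simply cites \cite{gigli2015convergence, erbar2015equivalence}---and your outline is a faithful summary of the argument given in exactly those references: extrinsic embedding, tightness of optimal geodesic plans, joint upper semicontinuity of the R\'enyi-type functionals for the CD$^*$ part, and Mosco convergence of Cheeger energies (hence linearity of the limit heat flow) for infinitesimal Hilbertianity. Since the genuinely hard steps (the $\Gamma$-$\limsup$ recovery sequences for the Mosco convergence and the localization of the semicontinuity argument in the pointed non-compact setting) are explicitly deferred to the same sources the paper cites, your proposal is consistent with, and no less complete than, what the paper provides.
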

Finally, we state a few properties of RCD($0,N$) spaces (the first one is proved in \cite{gigli2014abresch}, the second in \cite{gigli2014overview}, the third in \cite{huang2020almost} and the fourth in \cite{huang2016noncompact}).\ Note that RCD($0,N$) condition is exactly the same as RCD$^*$($0,N$) condition.
\begin{thm}[Abresch-Gromoll excess estimate]\label{thm2.6}
	Let $(X,d,m)$ be an RCD(0,N) space with $N\in(1,\infty)$.\ Fix $p,q\in X$ and a minimizing geodesic $\gamma$ joining them.\ Define 
	\[l(x):=\min\{d(x,p),d(x,q)\}\ and\ h(x):=\min\limits_{t}d(x,\gamma_{t}).\]
	Then for any $x\in X$ with $l(x)>h(x)$, we have
\begin{align}
	e(x)\leqslant
	\begin{cases}
		2\frac{N-1}{N-2}\left(\frac{N-1}{N}\frac{h^{N}(x)}{l(x)-h(x)}\right)^{\frac{1}{N-1}} & \text{if } N>2,\\ \\
		\frac{N-1}{2-N}\frac{h^{2}(x)}{l(x)-h(x)}  & \text{if } 1<N<2, \\ \\
		a(x)h(x)\left(\frac{1}{1+\sqrt{1+a(x)^{2}}}+\log\frac{1+\sqrt{1+a(x)^{2}}}{a(x)}\right)  & \text{if } N=2.
	\end{cases}
\end{align}
where $a(x):=\frac{h(x)}{2(l(x)-h(x))}$ and $e(x):=d(x,p)+d(x,q)-d(p,q)$ is the excess function w.r.t p and q.
\end{thm}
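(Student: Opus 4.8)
This is the non-smooth incarnation of the Abresch--Gromoll excess estimate, so the plan is to re-run the Abresch--Gromoll argument with the calculus of measure-valued Laplacians on RCD spaces, following \cite{gigli2014abresch}. The one external ingredient I would invoke (or reprove from Bishop--Gromov, Remark~\ref{rem2.2}) is the Laplacian comparison for distance functions on an RCD$(0,N)$ space: for every $z_0\in X$, on $X\setminus\{z_0\}$ the function $\mathsf d_{z_0}:=\mathsf d(\cdot,z_0)$ has a locally finite signed measure-valued Laplacian with $\boldsymbol\Delta\,\mathsf d_{z_0}\le\frac{N-1}{\mathsf d_{z_0}}m$, the singular part being nonpositive (see \cite{gigli2015differential}). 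Granting this, fix $x$ with $l(x)>h(x)$, put $h=h(x)$, $l=l(x)$, and let $y\in\gamma$ realise $\mathsf d(x,y)=h$, so that $y\in\partial B_h(x)$ and $e(y)=0$. On $B_h(x)$ the triangle inequality from $x$ gives $\mathsf d_p,\mathsf d_q\ge l-h>0$, hence, summing the comparison inequalities for $p$ and $q$,
\[ \boldsymbol\Delta\,e\ \le\ \Bigl(\frac{N-1}{\mathsf d_p}+\frac{N-1}{\mathsf d_q}\Bigr)m\ \le\ \frac{2(N-1)}{l-h}\,m\ =:\ \lambda\,m \qquad\text{on } B_h(x), \]
while globally $e\ge0$ and $e\equiv0$ on $\gamma$. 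Thus $e$ is a nonnegative function on $B_h(x)$ whose Laplacian is bounded above by the small constant $\lambda$, vanishing at the boundary point $y$, with $\nabla e=0$ along $\gamma$.

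The core is then a comparison on $B_h(x)$ with the radial model profile. One lets $\phi$ solve the one-dimensional equation $\phi''+\frac{N-1}{r}\phi'=\lambda$ with the normalisation $\phi(h)=\phi'(h)=0$ suggested by the behaviour of $e$ at $y$, compares $e$ with $\phi(\mathsf d_x)$ through the maximum principle for measure-valued Laplacians (the Laplacian comparison for $\mathsf d_x$ is what makes $\boldsymbol\Delta(\phi\circ\mathsf d_x)$ tractable, and $e\ge0$ together with $e(y)=0$ is the boundary input), and for $N\ge2$ --- where $\phi(r)\to+\infty$ as $r\to0$ --- combines this with the elementary bound $e(x)\le\max_{\partial B_\varepsilon(x)}e+2\varepsilon$ coming from $e$ being $2$-Lipschitz, optimising over $\varepsilon\in(0,h)$. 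Solving the ODE explicitly yields exactly the three regimes in the statement: for $N>2$, $\phi$ is a combination of $r^2$ and $r^{2-N}$, and the $\varepsilon$-optimisation produces the exponent $\frac1{N-1}$ and the constant $\frac{N-1}{N-2}$; for $1<N<2$, $\phi$ is the same combination but now finite at $r=0$, giving directly $\frac{N-1}{2-N}\frac{h^2}{l-h}$; for $N=2$, $\phi$ is the logarithmic profile, giving the stated expression in terms of $a(x)$. One finishes by passing from ``good'' $x$ (with $B_h(x)$ disjoint from the cut loci of $p$ and $q$) to all admissible $x$ via continuity of $e$, $h$, $l$.

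I expect the main obstacle to be entirely in carrying this out non-smoothly: the functions involved ($e$, $\mathsf d_x$, and the barrier) are merely Lipschitz, their Laplacians are signed Radon measures whose singular parts (on cut loci, and at $x$) must be shown to carry the right sign for the comparison to go through; one needs a sufficiently robust maximum/comparison principle for measure-valued Laplacians on RCD spaces, which is where the machinery recalled in \S\ref{2} and developed in \cite{ambrosio2014calculus,gigli2015differential} enters; and the $N=2$ logarithmic case together with the constant-tracking in the $N>2$ optimisation need some care. The maximum-principle step, and in particular the treatment of the singular part of $\boldsymbol\Delta e$, is the crux.
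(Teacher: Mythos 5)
The paper does not prove this statement at all: Theorem~\ref{thm2.6} is imported verbatim from Gigli--Mosconi \cite{gigli2014abresch} (the sentence right after the theorem makes this explicit), so there is no in-paper proof to compare against. Judged on its own, your sketch is a faithful reconstruction of the Gigli--Mosconi argument: Laplacian comparison for distance functions on RCD$(0,N)$ spaces, the constant upper bound $\boldsymbol\Delta e\le\frac{2(N-1)}{l-h}m$ on $B_h(x)$, the radial barrier $\phi$ with $\phi(h)=\phi'(h)=0$ (which is indeed the correct normalisation), the three regimes according to the sign of $N-2$, and for $N>2$ the optimisation over $\varepsilon$ that produces exactly the constants in the statement. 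The identification of the singular parts and the weak maximum principle as the technical crux is also accurate.

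Two points need fixing before this becomes a proof. First, the comparison step has the wrong extremum: applying the minimum principle to the superharmonic function $e-\phi\circ\mathsf d_x$ on the annulus $B_h(x)\setminus\bar B_\varepsilon(x)$ yields only that $\min_{\partial B_\varepsilon(x)}e\le\phi(\varepsilon)$ (a single good point on $\partial B_\varepsilon(x)$), not a bound on $\max_{\partial B_\varepsilon(x)}e$. The chain you wrote, $e(x)\le\max_{\partial B_\varepsilon(x)}e+2\varepsilon$, is true but cannot be closed; the correct chain is $e(x)\le\min_{\partial B_\varepsilon(x)}e+2\varepsilon\le\phi(\varepsilon)+2\varepsilon$. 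Second, with radius exactly $h$ the foot point $y$ lies on $\partial B_h(x)$, where $e-\phi\circ\mathsf d_x=0$; the minimum principle then only gives $e\ge\phi\circ\mathsf d_x$ on the annulus, which is vacuous. One must run the comparison on $B_{h+\delta}(x)$ (using $\boldsymbol\Delta e\le\frac{2(N-1)}{l-h-\delta}m$ there, legitimate since $h<l$) so that $y$ is interior and $(e-\phi)(y)<0$ forces the negative minimum onto $\partial B_\varepsilon(x)$, and then let $\delta\to0$. Finally, the closing remark about passing from points whose ball avoids the cut loci to general points is unnecessary in this setting: the measure-valued Laplacian comparison of \cite{gigli2015differential} holds globally on $X\setminus\{z_0\}$ with nonpositive singular part, which is precisely what makes the non-smooth statement cleaner than the classical one.
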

Note that in \cite{gigli2014abresch}, Gigli and Mosconi proved excess estimates for all RCD($K,N$) spaces with $K\leqslant0$, but for our purposes, we only need the case $K=0$ here.
\begin{thm}[Splitting]\label{thm2.7}
	Let $(X,d,m)$ be an RCD(0,N) space with $N\in[1,\infty)$.\ If $(X,d)$ contains a line, then
	$(X,d,m)$ is isomorphic to $(X'\times\mathbb{R},d'\times d_{E},m'\times\mathcal{L}^{1})$, where $d_{E}$ is the Euclidean metric, $\mathcal{L}^{1}$ is the Lebesgue measure and $(X',d',m')$ is an RCD(0,N-1) space if $N\geqslant2$ and a singleton if $N<2$.
\end{thm}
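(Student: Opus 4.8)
The plan is to run the non-smooth analogue of the Cheeger--Gromoll argument, exactly as in Gigli's splitting theorem, exploiting the first- and second-order calculus available on $\mathrm{RCD}$ spaces. First I would fix a line $\gamma\colon\mathbb{R}\to X$ and introduce the Busemann functions: for $t>0$ set $b^{\pm}_{t}(x):=t-d(x,\gamma(\pm t))$; each is $1$-Lipschitz and $t\mapsto b^{\pm}_{t}(x)$ is nondecreasing and bounded above by $d(x,\gamma(0))$, so the limits $b^{\pm}:=\lim_{t\to\infty}b^{\pm}_{t}$ exist, are $1$-Lipschitz, and satisfy $b^{+}+b^{-}\ge0$ on $X$ with equality along $\gamma$.

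Next I would show that $b:=b^{+}$ is harmonic. On an $\mathrm{RCD}(0,N)$ space the Laplacian comparison $\Delta d(\cdot,y)\le\frac{N-1}{d(\cdot,y)}\,m$ holds in the distributional sense on $X\setminus\{y\}$; applying it with $y=\gamma(\pm t)$ and passing to the limit $t\to\infty$ (using locally uniform convergence $b^{\pm}_{t}\to b^{\pm}$ and stability of the Laplacian inequality under such limits) gives $\Delta b^{\pm}\le0$, i.e.\ $b^{+}$ and $b^{-}$ are superharmonic. Then $b^{+}+b^{-}$ is superharmonic, nonnegative, and vanishes at $\gamma(0)$, so the strong maximum principle on $\mathrm{RCD}$ spaces forces $b^{+}+b^{-}\equiv0$; hence $b=b^{+}=-b^{-}$ is both super- and subharmonic, so $\Delta b=0$. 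A separate argument using asymptotes of $\gamma$ gives the eikonal equation $|\nabla b|=1$ $m$-a.e. Now I would feed $b$ into the dimensional, measure-valued Bochner inequality valid on $\mathrm{RCD}(0,N)$ spaces,
\[
\tfrac12\,\Delta|\nabla b|^{2}\ \ge\ \bigl(|\mathrm{Hess}\,b|_{\mathrm{HS}}^{2}+\langle\nabla b,\nabla\Delta b\rangle\bigr)\,m\ \ge\ \frac{(\Delta b)^{2}}{N}\,m,
\]
and since $|\nabla b|^{2}\equiv1$ and $\Delta b=0$ the left-hand side vanishes, forcing $\mathrm{Hess}\,b=0$ $m$-a.e.\ and leaving no singular part in $\Delta|\nabla b|^{2}$; in particular $\nabla b$ is a parallel unit vector field with vanishing divergence.

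The final step is to convert this rigidity into the product structure. The gradient flow $(\Phi_{s})_{s\in\mathbb{R}}$ of $\nabla b$ is a well-defined regular Lagrangian flow (its generator is a Sobolev vector field with $\mathrm{div}\,\nabla b=\Delta b=0$), it preserves $m$, and because $\mathrm{Hess}\,b=0$ it acts by isometries with $d(\Phi_{s}(x),\Phi_{s'}(x))=|s-s'|$. Taking $X':=b^{-1}(0)$ with the restricted distance $d'$ and the measure $m'$ obtained by disintegrating $m$ along $b$, the map $(z,s)\mapsto\Phi_{s}(z)$ gives an isomorphism of metric measure spaces $X'\times\mathbb{R}\to X$: the distance splits by a Pythagorean computation from $\mathrm{Hess}\,b=0$, and the measure splits as $m'\times\mathcal{L}^{1}$ by flow-invariance. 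If $N\ge2$, the tensorization property of the curvature--dimension condition, together with infinitesimal Hilbertianity of $X'$ inherited from $X$, yields that $(X',d',m')$ is $\mathrm{RCD}(0,N-1)$; if $1\le N<2$, a synthetic dimension $N-1<1$ is incompatible with any non-degenerate space (e.g.\ via Bishop--Gromov), so $X'$ is a single point and $X\cong\mathbb{R}$.

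The main obstacle is everything in the non-smooth framework surrounding the Bochner rigidity: giving meaning to $\mathrm{Hess}\,b$ and to the measure-valued Bochner inequality, extracting the full rigidity (including the vanishing of the singular part of $\Delta|\nabla b|^{2}$), and then upgrading ``$\mathrm{Hess}\,b=0$'' to an honest isometric product through the theory of flows of Sobolev vector fields with vanishing symmetric derivative; the limiting argument for the Busemann functions in the second step also needs care. Since this is precisely Gigli's splitting theorem, in the present paper one simply invokes \cite{gigli2013splitting}, the above being the route of its proof.
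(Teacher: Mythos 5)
This theorem is stated in the paper without proof, as a quotation of Gigli's splitting theorem with a citation to \cite{gigli2013splitting}, and your outline is a faithful summary of the strategy of that cited proof (Busemann functions, harmonicity and the eikonal equation, the dimensional Bochner rigidity giving $\mathrm{Hess}\,b=0$, and the gradient flow producing the metric-measure product). So the proposal is correct and takes essentially the same route the paper relies on; as you note yourself, for the purposes of this paper one simply invokes the reference.
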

\begin{thm}\label{thm2.8}
	Let $(X,d,m)$ be an RCD(0,N) space with $m(B_{r}(p))\leqslant Cr$ for some positive constant $C$ and $N\in[1,\infty)$.\ If $(X,d,m)$ does not split, then
	\[
	\lim_{r \to \infty}
	\frac{diam(\partial B_{r}(p))}{r}=0.
	\]	
\end{thm}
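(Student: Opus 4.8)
The plan is to argue by contradiction via a blow-down: assuming the conclusion fails, I would rescale $(X,d,m)$ at a sequence of scales $r_i\to\infty$, pass to an asymptotic cone, and show that such a limit cannot exist. So suppose $\limsup_{r\to\infty} diam(\partial B_r(p))/r = 2a > 0$, and choose $r_i\to\infty$ together with $x_i,y_i\in\partial B_{r_i}(p)$ satisfying $d(x_i,y_i)\ge a r_i$. I would first record two volume bounds: the hypothesis gives $m(B_r(p))\le Cr$, while the Calabi--Yau type lower bound for non-compact RCD$(0,N)$ spaces (a consequence of Bishop--Gromov monotonicity together with a standard argument along a ray issuing from $p$) gives $m(B_r(p))\ge cr$ for all large $r$; thus $m(B_r(p))$ is comparable to $r$.

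Next I would set $X_i:=(X,\,r_i^{-1}d,\,m(B_{r_i}(p))^{-1}m,\,p)$. Since the RCD condition with $K=0$ is scale invariant, each $X_i$ is RCD$(0,N)$, and the two volume bounds make $m_i(B_R(p))$ bounded above and below by fixed multiples of $R$, uniformly in $i$ and $R$. Uniform doubling, Gromov precompactness and the stability Proposition~\ref{prop2.5} then yield, along a subsequence, pmGH convergence to a non-compact RCD$(0,N)$ space $(Y,\rho,\mu,o)$ with $\mu(B_R(o))$ comparable to $R$ for every $R>0$. Passing to a further subsequence, $x_i$ and $y_i$ converge to points $\bar x,\bar y\in Y$ with $\rho(o,\bar x)=\rho(o,\bar y)=1$ and $\rho(\bar x,\bar y)\ge a$; in particular $diam(\partial B_1(o))\ge a>0$ in $Y$.

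The core of the argument would be to rule this out by identifying $Y$. Being a blow-down of an RCD$(0,N)$ space, $Y$ is a metric cone with vertex $o$ — the asymptotic-cone incarnation of ``volume cone implies metric cone'' on RCD spaces — and in particular it carries a dilation fixing $o$, so $\mu(B_R(o))=R^{k}\mu(B_1(o))$ for some $k$; comparison with $\mu(B_R(o))\asymp R$ for both large and small $R$ forces $k=1$, i.e.\ $Y$ is one-dimensional. By the classification of one-dimensional RCD$(0,N)$ spaces, the non-compact geodesic cone $Y$ must be either $[0,\infty)$ or $\mathbb{R}$. The line is excluded by hypothesis: if $Y=\mathbb{R}$ then $X$ has at least two ends, and a non-compact RCD$(0,N)$ space with at least two ends splits off a line by the Splitting Theorem~\ref{thm2.7} (take rays to the two ends; for large $r$ the minimal geodesics between their time-$r$ points meet a fixed compact set and subconverge to a geodesic line), contrary to assumption. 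Hence $Y=[0,\infty)$, so $\partial B_1(o)=\{1\}$ is a single point and $diam(\partial B_1(o))=0$, contradicting $\rho(\bar x,\bar y)\ge a$. When $N=1$ there is nothing to prove, since then $X$ is one-dimensional and, being non-splitting, a half-line.

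The step I expect to be hardest is showing that the blow-down $Y$ is a metric cone. The classical argument via almost-rigidity of Bishop--Gromov runs smoothly when the asymptotic volume ratio $\lim_{r}m(B_r(p))/r^N$ is positive, but under linear volume growth this ratio vanishes for $N>1$, so one must instead invoke the RCD almost-splitting and annulus-to-cone machinery with care, or else circumvent the cone structure altogether by proving the one-dimensionality of $Y$ directly and then appealing to the classification of one-dimensional RCD$(0,N)$ spaces together with the ``at least two ends $\Rightarrow$ splits'' dichotomy. A minor additional subtlety is the relation between Gromov--Hausdorff limits of the spheres $\partial B_{r_i}(p)$ and the sphere $\partial B_1(o)$ in $Y$; this is avoided above because one only needs the limit points of $x_i,y_i$ to lie on $\partial B_1(o)$, which holds automatically from continuity of the distance under pmGH convergence.
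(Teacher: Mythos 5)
The decisive step of your argument --- ``being a blow-down of an RCD$(0,N)$ space, $Y$ is a metric cone with vertex $o$'' --- is not a theorem, and you do not supply a proof of it; you only flag it as the hard step and gesture at ``annulus-to-cone machinery'' or a direct one-dimensionality argument without carrying either out. The volume-cone-implies-metric-cone rigidity produces cone structure on asymptotic cones only when the asymptotic volume ratio $\lim_r m(B_r(p))/r^N$ is \emph{positive}; under linear volume growth this ratio is zero for $N>1$, and there are smooth manifolds with $Ric\geqslant 0$ (already in Cheeger--Colding's work) whose asymptotic cones are not metric cones and are not even unique. Your fallback --- deducing one-dimensionality of $Y$ from $\mu(B_R(o))\asymp R$ --- also does not close the gap: that bound is only available at the single basepoint $o$ and controls nothing at small scales away from $o$, so it does not determine the essential dimension of $Y$; and deducing that $Y$ is a ray is essentially \emph{equivalent} to the sublinear diameter growth you are trying to prove, so the argument is circular in spirit. (The secondary steps --- the Calabi--Yau lower bound from Theorem~\ref{thm2.9}, the uniform two-sided bounds $m_i(B_R(p))\asymp R$ after rescaling, precompactness plus Proposition~\ref{prop2.5}, and ``two ends $\Rightarrow$ splits'' via Theorem~\ref{thm2.7} --- are fine, though the passage from a subsequential blow-down equal to $\mathbb{R}$ to $X$ having two ends would itself need an argument that the annuli $A(r_i/2,2r_i)$ stay disconnected.)

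For comparison, the paper does not prove this statement; it cites Huang \cite{huang2020almost}, whose argument (following Sormani \cite{sormani2000almost} in the smooth case) avoids blow-downs entirely. One fixes a ray $\gamma$ from $p$ and its Busemann function, and shows via the Abresch--Gromoll excess estimate (Theorem~\ref{thm2.6}) combined with Bishop--Gromov volume comparison on regions between Busemann level sets that a point of $\partial B_r(p)$ at distance $\geqslant \varepsilon r$ from the ray would force strictly superlinear volume growth, contradicting $m(B_r(p))\leqslant Cr$; the non-splitting hypothesis is used to reduce to a single ray. That route works precisely because it never needs any structure of the asymptotic cone. To salvage your approach you would have to prove the cone (or one-dimensionality) statement for $Y$, and at present the only known way to do that is through the excess-estimate argument itself.
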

\begin{thm}\label{thm2.9}
	Let $(X,d,m)$ be a noncompact RCD(0,N) space with $N\in[1,\infty)$, then for every $p\in X$, there exists a constant $C=C(N,m(B_{1}(p)))$ such that
	\[
	m(B_{r}(p))\geqslant Cr.
	\]	
\end{thm}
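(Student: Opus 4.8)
The plan is to adapt Yau's classical argument for the linear lower bound on the volume growth of complete noncompact manifolds with nonnegative Ricci curvature, which transplants essentially verbatim to the RCD setting once one has two facts recorded in Remark \ref{rem2.2}: the properness of $(X,d,m)$, and the Bishop--Gromov volume comparison, which for $K=0$ states exactly that $R\mapsto m(B_R(x))/R^{N}$ is non-increasing for every $x\in X$. First I would produce a ray from $p$: since $(X,d)$ is a complete geodesic space which is noncompact and proper, it is unbounded (a bounded $X$ would be a compact ball), so there exist $x_j$ with $d(p,x_j)\to\infty$; joining $p$ to $x_j$ by minimizing geodesics, restricting these to $[0,j]$, and extracting a locally uniform limit via Arzel\`a--Ascoli (using properness) yields a ray $\gamma\colon[0,\infty)\to X$ with $\gamma(0)=p$ and $d(p,\gamma(t))=t$ for all $t\ge0$.

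The core of the argument is a single comparison between two concentric balls far out along the ray. Fix $t\ge 2$ and look at $B_{t-1}(\gamma(t))\subset B_{t+1}(\gamma(t))$. Two triangle-inequality observations: if $d(x,\gamma(t))<t-1$ then $d(x,p)>1$, and if $d(x,p)<1$ then $d(x,\gamma(t))<t+1$. Together these give $B_1(p)\subset B_{t+1}(\gamma(t))\setminus B_{t-1}(\gamma(t))$, hence
\[
m\bigl(B_1(p)\bigr)\ \le\ m\bigl(B_{t+1}(\gamma(t))\bigr)-m\bigl(B_{t-1}(\gamma(t))\bigr).
\]
On the other hand, Bishop--Gromov centered at $\gamma(t)$ gives $m(B_{t+1}(\gamma(t)))\le\bigl(\tfrac{t+1}{t-1}\bigr)^{N}m(B_{t-1}(\gamma(t)))$, so that
\[
m\bigl(B_1(p)\bigr)\ \le\ \Bigl[\bigl(\tfrac{t+1}{t-1}\bigr)^{N}-1\Bigr]\,m\bigl(B_{t-1}(\gamma(t))\bigr).
\]

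To finish, an elementary estimate (the mean value theorem, using $N\ge1$) bounds $\bigl(\tfrac{t+1}{t-1}\bigr)^{N}-1\le 2N\,3^{N-1}/(t-1)$ for $t\ge2$, whence $m(B_{t-1}(\gamma(t)))\ge \tfrac{t-1}{2N\,3^{N-1}}\,m(B_1(p))$. Since $B_{t-1}(\gamma(t))\subset B_{2t-1}(p)$, this yields $m(B_{2t-1}(p))\ge \tfrac{t-1}{2N\,3^{N-1}}\,m(B_1(p))$; writing $r=2t-1$ and absorbing the bounded range $1\le r\le 3$ into the constant, we obtain $m(B_r(p))\ge C\,r$ for all $r\ge 1$ with $C=C\bigl(N,m(B_1(p))\bigr)$, as claimed.

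I do not expect a genuine obstacle. The only real idea is the choice of the pair $B_{t\mp1}(\gamma(t))$: the thin annulus between them already contains the fixed ball $B_1(p)$, yet by Bishop--Gromov it occupies only a fraction of order $1/t$ of the inner ball, which forces that inner ball to have volume $\gtrsim t$; everything else is the triangle inequality, a one-line calculus bound, and the two cited RCD facts. The only mildly delicate point is the existence of the ray, but that is the standard Arzel\`a--Ascoli limiting argument and uses nothing beyond properness of $(X,d,m)$.
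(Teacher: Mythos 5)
Your proof is correct. Note that the paper does not actually prove Theorem \ref{thm2.9}; it is quoted from \cite{huang2016noncompact}, and what you have written is the standard Yau-type argument (ray from $p$ via Arzel\`a--Ascoli, the annulus $B_{t+1}(\gamma(t))\setminus B_{t-1}(\gamma(t))$ containing $B_1(p)$, and Bishop--Gromov at $\gamma(t)$), which carries over to RCD$(0,N)$ exactly as you describe. The only caveat, which you handle correctly, is that the linear lower bound can only hold for $r\geqslant 1$ (for small $r$ the measure may decay like $r^N$), so restricting to $r\geqslant1$ and absorbing the range $1\leqslant r\leqslant 3$ into the constant is the right reading of the statement.
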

\subsection{The topology on metric measure spaces}	
We first recall the definition of the universal cover of a metric space \cite{spanier1989algebraic}.
\begin{defn}[Universal cover of a metric space]\label{defn2.10}
	Let $(X,d)$ be a connected metric space.\ We say that a connected metric space $(\tilde{X},\tilde{d})$ is a universal cover of $(X,d)$ if $(\tilde{X},\tilde{d})$ is a cover for $(X,d)$ with the following property: for any cover $(\bar{X},\bar{d})$ of $(X,d)$, there is a commutative diagram formed by a continuous map $f:(\tilde{X},\tilde{d})\to(\bar{X},\bar{d})$ and the two covering projections onto $X$:
	\begin{displaymath}
		\xymatrix{
		\tilde{X} \ar[r]^{f}
	              \ar[dr]_{p_{1}}
	    &  \bar{X} \ar[d]^{p_{2}} \\
	    &  {X}
             . }
	\end{displaymath}
\end{defn}
\begin{rem}\label{rem2.11}
	The universal cover may not exist in general.\ However, if it exists then it is unique.\ Moreover, if a space is locally path connected and semi-locally simply connected (i.e., for all $x\in X$, there is a neighborhood $U_{x}$ such that any loop in $U_{x}$ is contractible in $X$), then it has a simply connected universal cover.\ On the other hand, the universal cover of a locally path connected space may not be simply connected (See \cite{spanier1989algebraic}). 
\end{rem}
If a space has a universal cover, then one can consider the revised fundamental group introduced in \cite{sormani2001hausdorff}.\ We first recall that for a covering $\pi:Y\to X$, a deck transformation is a homeomorphism $h:Y\to Y$ such that $\pi\circ h=\pi$ and all deck transformations form a group $Deck(Y,X)$.
\begin{defn}[Revised fundamental group]\label{defn2.12}
	 $(X,d)$ is a metric space which admits a universal cover $(\tilde{X},\tilde{d})$.\ Then the revised fundamental group of $X$, denoted by $G(X)$, is the group of deck transformations $Deck(\tilde{X},X)$.
\end{defn}
Notice that $G(X)$ can be seen as a quotient of $\pi_{1}(X,x)$ and the trivial element in $G(X)$ is represented by those loops in $X$ based at $x$, which are still loops when lifted to $\tilde{X}$.\ Thus if the universal cover $\tilde{X}$ is simply connected, then $G(X)$ is isomorphic to $\pi_{1}(X)$.\ Before Wang \cite{wang2023rcd} proved the semi-locally simply connectedness for $\text{RCD}^{*}(K,N)$ spaces, the existence of the universal cover of an RCD$^*$($K,N$) space was confirmed by Mondino and Wei \cite{mondino2019universal}.

\begin{thm}\label{thm2.13}
	Let $(X,d,m)$ be an RCD$^*(K,N)$ space with $K\in\mathbb{R}\ and\ N\in(1,\infty)$.\ Then
	$(X,d,m)$ admits a universal cover $(\tilde{X},\tilde{d},\tilde{m})$ which is also an RCD$^*(K,N)$ space.
\end{thm}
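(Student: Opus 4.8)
The plan is to realise the universal cover as the stabilised limit of the $\delta$-covers of Sormani-Wei and to transfer the synthetic Ricci bound onto these covers by means of the local-to-global property of the reduced curvature-dimension condition. \textbf{Construction and RCD structure of the $\delta$-covers.} For $\delta>0$ let $\mathcal U_\delta=\{B_\delta(x):x\in X\}$ and let $\pi_\delta\colon\tilde X^\delta\to X$ be the covering associated with $\mathcal U_\delta$ in the sense of Spanier; equivalently, $\tilde X^\delta$ corresponds to the normal subgroup of $\pi_1(X,p)$ generated by the classes $[\alpha\ast\beta\ast\alpha^{-1}]$ with $\beta$ a loop lying in some $B_\delta(x)$. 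Since $(X,d)$ is geodesic it is locally path connected, so $\tilde X^\delta$ exists and $\pi_\delta$ is a covering map. Endow $\tilde X^\delta$ with the length metric $\tilde d^\delta$ that makes $\pi_\delta$ a local isometry and, as $\pi_\delta$ is a local homeomorphism, let $\tilde m^\delta$ be the Borel measure obtained by gluing the pull-backs of $m$ under the local inverses of $\pi_\delta$ (these agree on overlaps); then $\pi_\delta$ is a local isomorphism of metric measure spaces. Properness of $X$ (Remark \ref{rem2.2}) lifts through the covering, so $(\tilde X^\delta,\tilde d^\delta)$ is complete, locally compact and geodesic. Since CD$^*(K,N)$ has the local-to-global property of Bacher-Sturm and infinitesimal Hilbertianity is a local condition, the local isomorphism with $(X,d,m)$ forces each $(\tilde X^\delta,\tilde d^\delta,\tilde m^\delta)$ to be an RCD$^*(K,N)$ space.

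\textbf{Reduction to stabilisation.} For $0<\delta'\le\delta$ the identity of $X$ lifts to a covering $\tilde X^{\delta'}\to\tilde X^{\delta}$, so $\{\tilde X^\delta\}_{\delta>0}$ is an inverse system. By the theory of Sormani-Wei, $X$ admits a universal cover if and only if this system stabilises, i.e.\ there is $\delta_0>0$ such that $\tilde X^{\delta'}\to\tilde X^{\delta_0}$ is an isomorphism for every $0<\delta'\le\delta_0$; in that case $\tilde X:=\tilde X^{\delta_0}$ is the universal cover, and it is RCD$^*(K,N)$ by the previous step. So it remains to prove stabilisation.

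\textbf{Stabilisation.} Suppose the $\delta$-covers do not stabilise; passing to a subsequence we get $\delta_i\downarrow0$ with $\delta_{i-1}/\delta_i\to\infty$ and $\tilde X^{\delta_i}\to\tilde X^{\delta_{i-1}}$ a nontrivial covering for every $i$. Each such $i$ produces a loop in $X$ that lifts to a loop in $\tilde X^{\delta_{i-1}}$ but not in $\tilde X^{\delta_i}$; subdividing it yields a based loop $c_i$ at a point $p_i\in X$ which is trivial at scale $\delta_{i-1}$, essential at scale $\delta_i$, and of length comparable to $\delta_i$. Rescaling by $\delta_i^{-1}$, the pointed spaces $(X,\delta_i^{-1}d,m,p_i)$ are RCD$^*(\delta_i^2K,N)$ and, by Bishop-Gromov (Remark \ref{rem2.2}), uniformly locally doubling, hence pmGH-precompact; by Proposition \ref{prop2.5} together with $\delta_i^2K\to0$, every subsequential limit $(Y,d_Y,\mathfrak m_Y,y)$ is an RCD$(0,N)$ space. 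Following the $\delta$-covers along this rescaled convergence --- they converge, equivariantly, to covers of $Y$ --- the loops $c_i$ descend to a loop in $Y$ of unit length that is essential in the unit-scale cover of $Y$ but trivial at every large scale; confronting this with the rigidity a blow-up inherits from Bishop-Gromov and from the splitting Theorem \ref{thm2.7} yields a contradiction. Hence a stabilisation scale $\delta_0$ exists. In the noncompact case one runs the same analysis over an exhaustion of $X$ by balls, using Theorem \ref{thm2.9} and Bishop-Gromov to keep the doubling constants uniform.

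\textbf{Main obstacle.} The first two steps are soft, using only the local-to-global property of CD$^*$ and Sormani-Wei's covering machinery. The difficulty is concentrated in the stabilisation step: one must control the $\delta$-cover construction under (equivariant) pmGH-convergence and rule out the persistence of arbitrarily short essential loops in the blow-up. This is exactly where the synthetic Ricci bound alone does not suffice, and the quantitative geometry of RCD$(K,N)$ spaces --- Bishop-Gromov volume comparison, precompactness, and the cone/splitting structure of blow-up limits --- has to be used.
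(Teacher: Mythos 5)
The paper does not prove Theorem \ref{thm2.13} at all --- it is quoted from Mondino--Wei \cite{mondino2019universal} --- so the comparison has to be with their argument. Your high-level strategy (realise $\tilde X$ as a stabilised Sormani--Wei $\delta$-cover, transfer the curvature-dimension condition by locality) is indeed theirs, and your second step (universal cover exists iff the $\delta$-covers stabilise) is the correct Sormani--Wei criterion. One repairable issue already in the first step: to apply the Bacher--Sturm local-to-global theorem to $\tilde X^\delta$ you need $\tilde X^\delta$ to be (essentially) non-branching, which is not a local property and cannot be read off from the local isomorphism with $X$; Mondino--Wei sidestep this circularity by using the equivalence of RCD$^*(K,N)$ with the Bakry--\'Emery condition BE$(K,N)$, which is genuinely local, and the locality of infinitesimal Hilbertianity also needs a citation rather than an assertion.

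The genuine gap is the stabilisation step, which you rightly call the main obstacle but do not actually prove. Two things break. First, the claim that the $\delta$-covers ``converge, equivariantly, to covers of $Y$'' along the rescalings is unjustified: covering-space and fundamental-group data are notoriously discontinuous under pointed (m)GH convergence (collapsing lens spaces, shrinking circle factors), and controlling $\tilde X^{\delta_i}$ along blow-ups based at varying points $p_i$ is exactly the difficulty the whole theorem turns on, so it cannot be taken as given. Second, even granting such convergence, the limiting configuration you describe is not a contradiction: a unit-length loop that is ``trivial at every large scale'' carries no information, since any loop lies in a ball of radius equal to its diameter and is therefore automatically in the subgroup generated by loops in $R$-balls for every large $R$; and an RCD$(0,N)$ blow-up limit can perfectly well support an essential loop of unit length (flat cylinders, quotients of cones), so neither Bishop--Gromov nor the splitting theorem (which would require $Y$ to contain a line, something you have not produced) rules it out. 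The proof in \cite{mondino2019universal} instead establishes a relative stabilisation statement for the $\delta$-covers of balls $B_r(x)$ by a direct quantitative argument on the covers themselves, using that they are already RCD$^*(K,N)$ by the first step. As written, your proposal settles the two soft steps and leaves the theorem's actual content unproved.
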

\begin{rem}\label{rem2.14}
	By Mondino and Wei's contruction, $\tilde{X}$ naturally inherits the length structure of the base $X$ and $\tilde{X}$ is locally isometric to $X$.\ Then by completeness and locally compactness, $X$ is a geodesic space.\ In our argument below, we will always assume the universal cover $\tilde{X}$ to be geodesic.
\end{rem}
Mondino and Wei \cite{mondino2019universal} also derived several structure properties on the revised fundamental group of an RCD$^*$($K,N$) space in the similar manner as Riemannian geometry.\ We present one of them, which is an extension of the celebrated result by Cheeger-Gromoll \cite{cheeger1971splitting} for compact manifolds with nonnegative Ricci curvature.
\begin{thm}\label{thm2.15}
	Let $(X,d,m)$ be a compact RCD(0,N) space with $N\in(1,\infty)$.\ Then the revised fundamental group $G(X)$ contains a finite normal subgroup $\phi\lhd G(X)$ such that $G(X)/\phi$ contains a  subgroup $\mathbb{Z}^{k}$ of finite index.
\end{thm}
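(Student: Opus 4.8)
The plan is to lift the problem to the universal cover. By Theorem~\ref{thm2.13}, $(\tilde X,\tilde d,\tilde m)$ is again an RCD$(0,N)$ space; by Remark~\ref{rem2.14} we take it geodesic, and by Remark~\ref{rem2.2} it is proper. Since the universal cover \cite{mondino2019universal} is a regular covering, $X=\tilde X/G$ with $G:=G(X)=\mathrm{Deck}(\tilde X,X)$, so $G$ acts on $\tilde X$ by isometries (the pullback metric is $G$-invariant), freely, properly discontinuously and cocompactly. The elementary geometric fact I will use twice is: a proper geodesic space carrying a cocompact isometric action is either compact or contains a line. Indeed, if it is noncompact, choose a ray $r$; using a compact fundamental domain, move $r(n)$ back by isometries $g_n$; then $g_n\circ r(\cdot+n)$ is a sequence of geodesics with $g_n(r(n))$ in a fixed compact set, and an Arzel\`a--Ascoli limit is a line.

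First I would split off the maximal Euclidean factor. If $\tilde X$ is noncompact it contains a line, so by the Splitting Theorem~\ref{thm2.7} $\tilde X\cong X_1\times\mathbb{R}$ with $X_1$ an RCD$(0,N-1)$ space or a point. Iterating — the process terminates after finitely many steps since $N<\infty$ — yields an isometric splitting $\tilde X\cong Y\times\mathbb{R}^k$ in which $Y$ is an RCD$(0,N-k)$ space, or a single point, and contains no line. Next I claim $Y$ is compact. Because $Y$ has no line, every line of $Y\times\mathbb{R}^k$ projects to a point in $Y$, so the Euclidean factor is canonical and every isometry of $\tilde X$ decomposes as a product; write $g=(\beta(g),\alpha(g))$ with $\beta(g)\in\mathrm{Isom}(Y)$ and $\alpha(g)\in\mathrm{Isom}(\mathbb{R}^k)$. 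The coordinate projection $\tilde X\to Y$ is $\beta$-equivariant, hence descends to a continuous surjection of the compact space $X$ onto $\beta(G)\backslash Y$, so $\beta(G)$ acts cocompactly on $Y$. Were $Y$ noncompact, the fact above would produce a line in $Y$ — a contradiction. Thus $Y$ is compact, and so $\mathrm{Isom}(Y)$ is a compact group.

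Then the conclusion is group-theoretic. As $G$ acts freely and properly discontinuously by isometries on the proper space $\tilde X$, the map $g\mapsto(\beta(g),\alpha(g))$ embeds $G$ as a discrete subgroup of $\mathrm{Isom}(\tilde X)=\mathrm{Isom}(Y)\times\mathrm{Isom}(\mathbb{R}^k)$. Set $\phi:=\ker\alpha\trianglelefteq G$; every element of $\phi$ has the form $(\beta(g),\mathrm{id})$, so $\beta$ embeds $\phi$ as a discrete subgroup of the \emph{compact} group $\mathrm{Isom}(Y)$, whence $\phi$ is finite. On the other hand a discrete subgroup of a product whose second factor $\mathrm{Isom}(Y)$ is compact has discrete projection to the first factor, so $\alpha(G)\cong G/\phi$ is discrete in $\mathrm{Isom}(\mathbb{R}^k)$; and the $\alpha$-equivariant projection $\tilde X\to\mathbb{R}^k$ descends to a continuous surjection of the compact $X$ onto $\alpha(G)\backslash\mathbb{R}^k$, so $\alpha(G)$ is cocompact. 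Therefore $\alpha(G)$ is a crystallographic group, and Bieberbach's theorem gives a finite-index subgroup of $\alpha(G)\cong G/\phi$ isomorphic to $\mathbb{Z}^k$. This is exactly the asserted structure.

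I expect the only real friction to be in the second paragraph: justifying that the isometry group of $Y\times\mathbb{R}^k$ respects the canonical Euclidean splitting, and handling $\mathrm{Isom}(\tilde X)$ and $\mathrm{Isom}(Y)$ as (locally) compact topological groups acting continuously — and, at the outset, invoking regularity of Mondino--Wei's universal cover to secure the cocompact $G$-action. The remaining ingredients — the Splitting Theorem, properness, Arzel\`a--Ascoli compactness, and Bieberbach's theorem — combine formally.
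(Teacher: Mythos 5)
Your argument is correct and is essentially the same as the one in the cited source: the paper itself states Theorem~\ref{thm2.15} without proof, quoting Mondino--Wei \cite{mondino2019universal}, whose proof is exactly this Cheeger--Gromoll scheme (cocompact deck action on the proper universal cover, iterated splitting $\tilde X\cong Y\times\mathbb{R}^k$ with $Y$ line-free and compact, $\mathrm{Isom}(Y\times\mathbb{R}^k)=\mathrm{Isom}(Y)\times\mathrm{Isom}(\mathbb{R}^k)$, finiteness of $\ker\alpha$, and Bieberbach). The friction points you flag (regularity of the Mondino--Wei cover, the product structure of the isometry group) are precisely the technical lemmas they establish, so nothing is missing.
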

\begin{rem}\label{rem2.16}
	By Theorem \ref{thm2.15}, the revised fundamental group of a compact RCD($0,N$) space is finitely generated (see also Proposition 2.25 in \cite{mondello2022upper}).\ Thus, for our purposes, we only need to consider the noncompact case when studying the finite generation on the (revised) fundamental group of an RCD($0,N$) space.
\end{rem}
Finally, we point out that it has been proved by Wang in \cite{wang2023rcd} that any $\text{RCD}^{*}$ space $(X,d,m)$ is semi-locally simply connected, which generalizes the same author's result in \cite{wang2021ricci}.
\begin{thm}\label{thm2.17}
Let $(X,d,m)$ be an $\text{RCD}^{*}(K,N)$ space with $K\in\mathbb{R}\ and\ N\in(1,\infty)$.\ Then $X$ is semi-locally simply connected.\ In particular, the universal cover $\tilde{X}$ is simply connected and $G(X)$ is isomorphic to $\pi_{1}(X)$.
\end{thm}
By Theorem \ref{thm2.17}, in the RCD setting, we get rid of the notion of revised fundamental group and the revised fundamental group $G(X)$ in Theorem \ref{thm2.15} can be replaced by $\pi_{1}(X)$.\ In the remaining part of this paper, we still use the notation $G(X)$ to denote the deck transformation group, which acts on $\tilde{X}$ discretely.\ Note that in this paper, we typically consider RCD spaces and $G(X)$ is isomorphic to $\pi_{1}(X)$ in most of our context.

\subsection{Structure of RCD($K,N$) spaces}
The main purpose of this subsection is to provide some metric measure structure theory of RCD spaces, which we will need in the proof of Theorem \ref{thm1.1} for the case $N=2$.\ We use the notion RCD($K,N$) instead of RCD$^{*}$($K,N$) in this subsection, since most previous works reviewed in this subsection selected this stronger notion, though it seems that all these results hold on RCD$^{*}$($K,N$) spaces.\\
\indent Given an RCD($K,N$) space $(X,d,m)$ with $x\in X$, we first recall the notion of tangent cones.
\begin{defn}[tangent cones]\label{defn2.18}
	We say that a pointed metric measure space $(Y,d_{Y},m_{Y},y)$ is a tangent cone of $(X,d,m)$ at $x$ if there exists a sequence $r_{i}\to0^{+}$ such that 
	\[(X,r_{i}^{-1}d,m(B_{r_{i}}(x))^{-1}m,x)\xrightarrow{pmGH}(Y,d_{Y},m_{Y},y).\]
	The collection of all tangent cones of $(X,d,m)$ at $x$ is denoted by $\text{Tan}(X,d,m,x)$.
\end{defn}
\indent A compactness result on RCD($K,N$) spaces yields that $\text{Tan}(X,d,m,x)$ is non-empty for any $x\in X$ (see Chapter 27 in \cite{villani2009optimal} for instance).\ We are now in the position to introduce the notions of $k$-regular set and essential dimension as follows.
\begin{defn}[$k$-regular set]\label{defn2.19}
    For any integer $k\in[1,N]$, we denote by $\mathcal{R}_{k}$ the set of all points $x\in X$ such that $\text{Tan}(X,d,m,x)=\left\lbrace(\mathbb{R}^{k},d_{eucl},(\omega_{k})^{-1}\mathcal{H}^{k},0^{k})\right\rbrace$, where $\omega_{k}$ is the volume of the unit ball in $\mathbb{R}^{k}$.\ We call $\mathcal{R}_{k}$ the $k$-regular set of $X$.
\end{defn}
\indent The following result is proved by Bru\`e-Semola in \cite{brue2020constancy}.
\begin{thm}\label{thm2.20}
	Let $(X,d,m)$ be an RCD$(K,N)$ space with $K\in\mathbb{R}\ and\ N\in(1,\infty)$.\ Then there exists a unique integer $k\in[1,N]$, called the essential dimension of $(X,d,m)$, denoted by $\dim_{ess}(X)$, such that $m(X\setminus\mathcal{R}_{k})=0$.
\end{thm}
When the essential dimension reaches its maximum value $N$, Brena-Gigli-Honda-Zhu obtain the following result (see Theorem 1.3 and Theorem 2.20 in \cite{brena2023weakly}).
\begin{thm}\label{thm2.21}
	If an RCD$(K,N)$ space $(X,d,m)$ satisfies $\dim_{ess}(X)=N$, then $m=c\mathcal{H}^{N}$ for some constant $c>0$.\ In particular, $(X,d,\mathcal{H}^{N})$ is an RCD$(K,N)$ space.
\end{thm}
Finally, let us recall that in dimension 2, the synthetic notions of lower bounds on sectional and Ricci curvature coincide (see \cite{lytchak2022ricci}).
\begin{thm}\label{thm2.22}
	If $(X,d,\mathcal{H}^{2})$ is an RCD$(K,2)$ space, then $(X,d)$ is an Alexandrov space with curvature $\geqslant K$.
\end{thm}
\noindent A combination of Theorem \ref{thm2.21}, Theorem \ref{thm2.22} and the results in \cite{kitabeppu2016characterization} enable us to handle the case $N=2$ of Theorem \ref{thm1.1}.

\section{Halfway Lemma and Uniform Cut Lemma on RCD(0,\textit{N}) Spaces}\label{3}
In this section, we extend two technical lemmas given by Sormani in \cite{sormani2000nonnegative} to a non-smooth context.\ First of all, we recall a notion introduced by Sormani \cite{sormani2000nonnegative}.
\begin{defn}\label{defn3.1}
	Let $X$ be a geodesic metric space which admits a universal cover $\tilde{X}$.\ Given $g\in G(X)$, we say that $\gamma$ is a minimal representative geodesic loop of $g$ if $\gamma=\pi\circ\tilde{\gamma}$, where $\tilde{\gamma}$ is a minimal geodesic from $\tilde{x}_{0}$ to $g\tilde{x}_{0}$.
\end{defn}
\begin{lem}[Halfway Lemma]\label{lem3.2}
	Let (X,d) be a proper geodesic metric space.\ Assume that (X,d) admits a universal cover ($\tilde{X}$,$\tilde{d}$).\ Then there exists an ordered set of independent generators $\{ g_{1},g_{2},g_{3},...\}$ of G(X) with minimal representative geodesic loops $\gamma_{k}$ of length $d_{k}$ such that 
	\[d(\gamma_{k}(0),\gamma_{k}(d_{k}/2))=d_{k}/2.\]
	If G(X) is infinitely generated, then we obtain a sequence of such generators.
\end{lem}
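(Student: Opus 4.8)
The plan is to build the generators greedily, ordered by translation length, and then to extract the halfway property from a short-cut argument. Fix a lift $\tilde{x}_0\in\tilde{X}$ of a basepoint $x_0=\pi(\tilde{x}_0)$ and, for $g\in G(X)$, set $|g|:=\tilde{d}(\tilde{x}_0,g\tilde{x}_0)$. Since $\tilde{X}$ is proper (in the situation of the paper it is itself an RCD$(0,N)$ space, hence proper by Remark~\ref{rem2.2}) and the deck group acts freely, isometrically and properly discontinuously on the connected space $\tilde{X}$, the orbit $G(X)\tilde{x}_0$ is discrete; thus $g\mapsto|g|$ attains a positive minimum on every nonempty subset of $G(X)\setminus\{e\}$. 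I would then define $g_1,g_2,\dots$ inductively: given $g_1,\dots,g_{k-1}$, if $H_{k-1}:=\langle g_1,\dots,g_{k-1}\rangle\neq G(X)$, let $g_k$ realize $\min\{|g|:g\in G(X)\setminus H_{k-1}\}$, choose a minimal geodesic $\tilde{\gamma}_k$ from $\tilde{x}_0$ to $g_k\tilde{x}_0$ (it exists since $\tilde{X}$ is proper and geodesic, cf.\ Remark~\ref{rem2.14}), and put $\gamma_k:=\pi\circ\tilde{\gamma}_k$, $d_k:=|g_k|=\operatorname{length}(\gamma_k)$.

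Next I would check the structural claims. By construction $g_k\notin H_{k-1}$, so $\{g_k\}$ is an ordered set of independent generators and $(d_k)$ is nondecreasing, as the sets $G(X)\setminus H_{k-1}$ shrink with $k$. If the procedure never stops then $d_k\to\infty$ (otherwise the infinitely many distinct orbit points $g_k\tilde{x}_0$ would lie in a fixed bounded ball, impossible since $\tilde{X}$ is proper and the orbit discrete), and then every $g\in G(X)$ belongs to some $H_{k-1}$: choosing $k$ with $d_k>|g|$, if $g\notin H_{k-1}$ then minimality would give $d_k\le|g|<d_k$. Hence $\{g_k\}$ generates $G(X)$; if instead the procedure stops at step $n$ then $G(X)=H_n$ is finitely generated. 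In particular, if $G(X)$ is infinitely generated the procedure cannot stop and an infinite sequence is produced.

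For the halfway property, fix $k$ and set $p:=\gamma_k(d_k/2)$, so that $d(x_0,p)\le d_k/2$ automatically. Suppose $d(x_0,p)=L<d_k/2$, take a minimal geodesic $\sigma$ in $X$ from $x_0$ to $p$, and form the two loops based at $x_0$
\[
\alpha:=\gamma_k|_{[0,d_k/2]}\ast\overline{\sigma},\qquad \beta:=\sigma\ast\gamma_k|_{[d_k/2,d_k]},
\]
each of length $d_k/2+L<d_k$. Since $\overline{\sigma}\ast\sigma$ is null-homotopic rel endpoints, $[\alpha][\beta]=[\gamma_k]$ in $\pi_1(X,x_0)$; pushing forward along the natural surjection $\pi_1(X,x_0)\twoheadrightarrow G(X)$ (see the discussion after Definition~\ref{defn2.12}) gives $ab=g_k$, where $a,b\in G(X)$ are the images of $[\alpha]$ and $[\beta]$. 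The lift of $\alpha$ starting at $\tilde{x}_0$ joins $\tilde{x}_0$ to $a\tilde{x}_0$ and has the same length as $\alpha$ since $\pi$ is a local isometry, so $|a|\le\operatorname{length}(\alpha)<d_k$, and similarly $|b|<d_k$. Since $g_k=ab\notin H_{k-1}$, at least one of $a,b$ --- say $a$ --- lies outside $H_{k-1}$, and then the minimality defining $d_k$ forces $d_k\le|a|<d_k$, a contradiction. Hence $d(x_0,p)=d_k/2$, and since $\gamma_k(0)=x_0$ this is the asserted identity.

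I do not expect a genuine obstacle here; the points requiring care are structural. Because $\tilde{X}$ is not known to be simply connected, the argument must be carried out through $G(X)$ and the quotient map from $\pi_1(X,x_0)$ rather than through $\pi_1$ itself, and one must ensure the greedy minimum is attained --- which is precisely where properness of $\tilde{X}$ (available since $\tilde{X}$ is RCD$(0,N)$) and proper discontinuity of the deck action enter. No curvature assumption is needed for this lemma; the RCD$(0,N)$ hypothesis, through the Abresch--Gromoll excess estimate, will only be invoked later, in the Uniform Cut Lemma.
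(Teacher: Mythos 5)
The proposal is correct and takes essentially the same approach as the paper: the same greedy construction of generators ordered by orbit distance $\tilde{d}(\tilde{x}_0,g\tilde{x}_0)$, and the same short-cut argument for the halfway property (splitting $\gamma_k$ at the midpoint via a shorter connecting geodesic into two loops of length $<d_k$, whose classes would then both lie in $\langle g_1,\dots,g_{k-1}\rangle$, forcing $g_k$ into that subgroup — a contradiction). Your additional verification that the $g_k$ actually generate $G(X)$ (via $d_k\to\infty$) is a small but welcome supplement that the paper leaves implicit.
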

\begin{proof}
	Fix $x_{0}\in X$ and let $\tilde{x}_{0}\in\tilde{X}$ be a lift of $x_{0}$.\ Since $\tilde{X}$ is proper and $G:=G(X)$ acts discretely on $\tilde{X}$, there exists a non-trivial element $g_{1}\in G$ such that
	\[\tilde{d}(\tilde{x}_{0},g_{1}\tilde{x}_{0})=\min_{g\neq e}{\tilde{d}(\tilde{x}_{0},g\tilde{x}_{0})}>0.\]
	Let $G_{1}=\langle g_{1}\rangle$.\ Define each $g_{k}\in G$ and $G_{k}$ inductively by
	\[	\tilde{d}(\tilde{x}_{0},g_{k}\tilde{x}_{0})=\min_{g\in G\setminus G_{k-1}}{\tilde{d}(\tilde{x}_{0},g\tilde{x}_{0})}>0,\]
	\[G_{k}=\langle g_{1},...,g_{k}\rangle.\]
	Notice that $G\setminus G_{k-1}$ is nonempty for all $k$ if $G$ is infinitely generated.\ Let $\tilde{\gamma}_{k}:[0,d_{k}]\to\tilde{X}$ be a unit speed minimal geodesic from $\tilde{x}_{0}$ to $g_{k}\tilde{x}_{0}$.\ Define $\gamma_{k}(t):=\pi(\tilde{\gamma}_{k}(t))$ (i.e., $\gamma_{k}$ is the minimal representative geodesic loop of $g_{k}$ based at $x_{0}$).\\ 
	\indent It only remains to prove: $d(\gamma_{k}(0),\gamma_{k}(d_{k}/2))=d_{k}/2,\  \forall k.$\\
	\indent Suppose that there is a $k\in\mathbb{N}$ such that $d(\gamma_{k}(0),\gamma_{k}(d_{k}/2))<d_{k}/2$.\ Then there exists $T<d_{k}/2$ such that $d(\gamma_{k}(0),\gamma_{k}(T))<T$.\ Thus we can find a minimal geodesic $\sigma$ from $\gamma_{k}(T)$ to $\gamma_{k}(0)$ with length $<T$.\\
	\indent Denote $h_{1}=[\sigma\circ\gamma_{k}(0\to T)]\in G$ and $h_{2}=[\sigma\circ\gamma_{k}(d_{k}\to T)]\in G$.\ Then
	\[\tilde{d}(\tilde{x}_{0},h_{1}\tilde{x}_{0})\leqslant T+L(\sigma)<2T<d_{k},\] and
	\[\tilde{d}(\tilde{x}_{0},h_{2}\tilde{x}_{0})\leqslant d_{k}-T+L(\sigma)<d_{k}.\]
	Therefore, $h_{1},h_{2}\in G_{k-1}$.\ But $g_{k}=h_{2}^{-1}\circ h_{1}$, which is a contradiction since $g_{k}\in G\setminus G_{k-1}$. 
\end{proof}
\begin{lem}[Uniform Cut Lemma]\label{lem3.3}
	Let (X,d,m) be a RCD(0,N) space with $N>2$.\ Let $\gamma$ be a geodesic loop based at $x_{0}\in X$ with $L(\gamma)=d$ and $[\gamma]\in G(X)$ is nontrivial.\ Suppose $\gamma$ satisfies the following two conditions:
	\begin{enumerate}
		\item If $\sigma$ based at $ x_{0} $ is a loop such that $ [\sigma]=[\gamma]$ in $G(X)$, then $ L(\sigma)\geqslant d $
		\item $\gamma$ is minimal on $[0,d/2]$ and $[d/2,d]$.
	\end{enumerate}
    Then there is a universal constant $S_{N}$ defined in Theorem \ref{thm1.1}, such that for any $x\in\partial B_{rd}(x_{0})$ with $r\geqslant S_{N}+1/2$,
    \[d(x,\gamma(d/2))\geqslant(r-1/2)d+2S_{N}d.\]
\end{lem}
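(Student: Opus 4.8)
The plan is to argue by contradiction: suppose some $x\in\partial B_{rd}(x_0)$ with $r\geqslant S_N+1/2$ satisfies $d(x,q)<(r-1/2)d+2S_Nd$, where $q:=\gamma(d/2)$, and set $g:=[\gamma]\in G(X)$. The first move is to transfer everything to the universal cover: by Theorem \ref{thm2.13} and Remark \ref{rem2.14}, $(\tilde X,\tilde d,\tilde m)$ is a geodesic RCD$(0,N)$ space, so the Abresch--Gromoll excess estimate (Theorem \ref{thm2.6}) is available on it. Fix a lift $\tilde x_0$ of $x_0$. Since loops at $x_0$ representing $g$ correspond exactly to paths in $\tilde X$ from $\tilde x_0$ to $g\tilde x_0$, condition (1) gives $\tilde d(\tilde x_0,g\tilde x_0)=d$; hence the lift $\tilde\gamma$ of $\gamma$ from $\tilde x_0$ is a minimizing geodesic to $g\tilde x_0$, and condition (2) forces its midpoint $\tilde q:=\tilde\gamma(d/2)$ to satisfy $\tilde d(\tilde x_0,\tilde q)=\tilde d(\tilde q,g\tilde x_0)=d/2$, so projecting down $d(x_0,q)=d/2$.

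Next, build auxiliary loops and feed them into the excess estimate. Let $\sigma$ be a minimizing geodesic from $x$ to $q$ and $c$ a minimizing geodesic from $x_0$ to $x$. Concatenating $c$, $\sigma^{-1}$ and, respectively, each half of $\gamma$ gives two loops at $x_0$, each of length $rd+d(x,q)+d/2$, whose classes differ by $g$. Lifting $c\star\sigma^{-1}$ from $\tilde x_0$ and then the two halves of $\gamma$, the common terminal-but-one vertex is a lift $\tilde q'$ of $q$, and condition (1) again shows that the resulting two lifts $\tilde x_0',\tilde x_0''$ of $x_0$ satisfy $\tilde d(\tilde x_0',\tilde x_0'')=d$, so the length-$d$ broken path $\tilde x_0'\to\tilde q'\to\tilde x_0''$ is in fact a minimizing geodesic with midpoint $\tilde q'$; moreover the lift $\tilde x$ of $x$ coming from $c$ has $\tilde d(\tilde x,\tilde q')=d(x,q)$. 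Apply Theorem \ref{thm2.6} on $\tilde X$ to $\tilde x$, with the role of the two base points played by $\tilde x_0',\tilde x_0''$ and the minimizing geodesic taken through $\tilde q'$: then $h(\tilde x)\leqslant\tilde d(\tilde x,\tilde q')=d(x,q)$, $l(\tilde x)=\min\{\tilde d(\tilde x,\tilde x_0'),\tilde d(\tilde x,\tilde x_0'')\}\geqslant d(x,x_0)=rd$, while the excess $\tilde d(\tilde x,\tilde x_0')+\tilde d(\tilde x,\tilde x_0'')-d$ is at least $(2r-1)d$ and at most $2\,d(x,q)$. Since $S_N<1/4$ and $r\geqslant S_N+1/2$, one checks $l(\tilde x)>h(\tilde x)$, so Theorem \ref{thm2.6} applies; inserting $h(\tilde x)\leqslant d(x,q)<(r-1/2+2S_N)d$ together with the explicit value of $S_N$ into the $N>2$ branch (resp.\ the $N=2$ branch) contradicts the lower bound $(2r-1)d$ for the excess --- equivalently, it produces a loop at $x_0$ representing $g$ of length strictly less than $d$, contradicting condition (1). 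This forces $d(x,q)\geqslant(r-1/2)d+2S_Nd$.

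The main obstacle is twofold. First, the bookkeeping that makes the Abresch--Gromoll bound collapse exactly to the threshold $4S_N$: one has to track constants so that the specific closed form of $S_N$ and the hypothesis $r\geqslant S_N+1/2$ are precisely what is needed to close the estimate, and the $N=2$ case forces use of the transcendental third branch of Theorem \ref{thm2.6}, which is what pins down $S_2=1/25$. Second --- the structural point emphasized in the introduction --- the construction above must be carried out \emph{without} any non-branching hypothesis, so one must check carefully that the lifted picture genuinely exhibits $\tilde q'$ as the midpoint of a minimizing geodesic between two lifts of $x_0$; it is exactly at this point that the excess estimate of Theorem \ref{thm2.6} is used as a substitute for non-branching, keeping the geometry of the RCD$(0,N)$ space from being too wild.
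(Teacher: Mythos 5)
Your overall strategy is the same as the paper's: lift to the universal cover, note that condition (1) forces $\tilde d(\tilde x_0,g\tilde x_0)=d$ so the lifted loop is a minimizing geodesic, take a lift of $x$ whose distance to the midpoint of that geodesic is at most $d(x,\gamma(d/2))$ while its distances to the two endpoints are at least $rd$, and feed the resulting excess lower bound $(2r-1)d$ into Theorem \ref{thm2.6}. (Your lifting bookkeeping via $c\star\sigma^{-1}$ and the two lifts $\tilde x_0',\tilde x_0''$ is just a deck-translate of the paper's simpler construction, which lifts $\gamma|_{[0,d/2]}$ followed by a geodesic $\alpha$ from $\gamma(d/2)$ to $x$ starting at $\tilde x_0$; that part is fine.)

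The genuine gap is that you try to run the contradiction directly for \emph{every} $r\geqslant S_N+1/2$, and this fails as soon as $r$ is bounded away from $S_N+1/2$. Under your contradiction hypothesis you only get $h\leqslant d(x,q)<(r-1/2+2S_N)d$, while $l\geqslant rd$, so all you can say is $l-h>(1/2-2S_N)d$. Plugging this into the $N>2$ branch of Theorem \ref{thm2.6} bounds the excess above by a quantity of order $d\,(h^N/(l-h))^{1/(N-1)}\sim d\,r^{N/(N-1)}$, which grows \emph{faster} than the lower bound $(2r-1)d$; already for $N=3$ and $r=1$ the Abresch--Gromoll upper bound exceeds $(2r-1)d$, so no contradiction arises. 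The Abresch--Gromoll estimate is only effective when $h$ is small relative to $l$, which is exactly what the hypothesis guarantees only at the critical radius. The missing idea is the two-step structure of the paper's proof: first prove the statement for $r=S_N+1/2$, where the contradiction hypothesis reads $d(x,\gamma(d/2))<3S_Nd$ and hence $h<3S_Nd$ is genuinely small compared with $l-h>(1/2-2S_N)d$, so the excess estimate does contradict $e(\tilde x)\geqslant 2S_Nd$; then, for $r>S_N+1/2$, take the point $y\in\partial B_{(1/2+S_N)d}(x_0)$ on a minimal geodesic from $\gamma(d/2)$ to $x$ and conclude by the triangle identity $d(x,\gamma(d/2))=d(x,y)+d(y,\gamma(d/2))\geqslant\bigl(rd-(1/2+S_N)d\bigr)+3S_Nd=(r-1/2)d+2S_Nd$. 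Without this reduction your argument does not close.
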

\begin{proof}
We first prove this lemma for $r=S_{N}+1/2$ and argue by contradiction.\ Suppose that there exists a point $x\in\partial B_{rd}(x_{0})$ such that 
\[L:=d(x,\gamma(d/2))<3S_{N}d.\]
Let $\alpha:[0,L]\to X$ be a unit speed minimal geodesic from $\gamma(d/2)$ to $x$.\ Let $(\tilde{X}, \tilde{d},\tilde{m},\tilde{x}_{0})$ be the universal cover of $(X,d,m,x_{0})$ and $\tilde{\gamma}$ be the lift of $\gamma$.\ By Theorem \ref{thm2.13}, $(\tilde{X}, \tilde{d},\tilde{m})$ is an RCD($0,N$) space.\\
\indent Denote $g=[\gamma]\in G(X)$.\ Then $\tilde{\gamma}$ is a minimal geodesic from $\tilde{x}_{0}$ to $g\tilde{x}_{0}$ by the first condition of $\gamma$.\ Thus $\tilde{d}(\tilde{x}_{0},g\tilde{x}_{0})=d$.\ We can also lift the curve $\alpha\circ\gamma(0\to d/2)$ to $\tilde{\alpha}\circ\tilde{\gamma}(0\to d/2)$ where $\tilde{\alpha}$ runs from $\tilde{\gamma}(d/2)$ to $\tilde{x}\in\tilde{X}$.\ Note that $L(\tilde{\alpha})=L$ and
\begin{align*}
	l_{1}:=\tilde{d}(\tilde{x},\tilde{x}_{0})\geqslant d(x,x_{0})=rd\\
	l_{2}:=\tilde{d}(\tilde{x},g\tilde{x}_{0})\geqslant d(x,x_{0})=rd
\end{align*}
The excess of $\tilde{x}$ w.r.t $\tilde{x}_{0}$ and $g\tilde{x}_{0}$ satisfies
\begin{align*}
	e(\tilde{x})=l_{1}+l_{2}-\tilde{d}(\tilde{x}_{0},g\tilde{x}_{0})\geqslant2rd-d=2S_{N}d.
\end{align*}
We can now apply the excess estimate (Theorem \ref{thm2.6}).\\
\indent Notice that
\begin{align}
	2S_{N}d\leqslant e(\tilde{x})\leqslant2\frac{N-1}{N-2}\left(\frac{N-1}{N}\frac{h^{N}}{l-h}\right)^{\frac{1}{N-1}},
\end{align}
where $l:=\min\{l_{1},l_{2}\},h=\min\tilde{d}(\tilde{x},\tilde{\gamma}(t))$ and
\begin{align}
	h & \leqslant L(\tilde{\alpha})=L<3S_{N}d,\\
	l-h & \geqslant(S_{N}+1/2)d-L>(1/2-2S_{N})d>d/4.
\end{align}
Combining (3.1)-(3.3), we get
\[S_{N}<\frac{N-1}{N-2}\left(4\frac{N-1}{N}(3S_{N})^N\right)^{\frac{1}{N-1}}\] and
\[S_{N}>\dfrac{N}{4(N-1)} \dfrac{1}{3^N} \left(\dfrac{N-2}{N-1}\right)^{N-1}.\]
This contradicts with the definition of $S_{N}$ ($N>2$).\\
\indent For $r>S_{N}+1/2$ and $x\in\partial B_{rd}(x_{0})$, let $y\in\partial B_{(1/2+S_{N})d}(x_{0})$ be a point on a minimal geodesic from $\gamma(d/2)$ to $x$.\ Then
\begin{align*}
	d(x,\gamma(d/2)) & =d(x,y)+d(y,\gamma(d/2))\\
	                 & \geqslant(rd-(1/2+S_{N})d)+3S_{N}d=(r-1/2)d+2S_{N}d.
\end{align*}
Thus we complete the proof.
\end{proof}

\section{Proof of Main Theorems}\label{4}
In this section, we prove Theorem \ref{thm1.1}, Corallary \ref{cor1.2} and Corallary \ref{cor1.3}.
	\begin{proof}[Proof of Theorem \ref{thm1.1}]
	We first point out that there is a complete classification for RCD$^*$($K,N$) spaces when $N\in[1,2)$.\ Indeed, $X$ is isometric to $\mathbb{R},\mathbb{R}_{+},S^{1}(r)$ or $[0,l]$ (see Corollary 1.2 in \cite{kitabeppu2016characterization}).\ Thus we only need to consider the case $N\geqslant2$.\\
	\indent (1) $N>2$.\\
	\indent Suppose $G:=G(X)\cong\pi_{1}(X)$ is infinitely generated.\ Construct a sequence of independent generators $\{g_{k}\}$, as in Lemma \ref{lem3.2}, with minimal representative geodesic loops $\gamma_{k}$ based at some point $p$.\ Notice that $\gamma_{k}$ satisfies the hypothesis in Lemma \ref{lem3.3}.\ Let $(\tilde{X},\tilde{d},\tilde{p})$ be the universal cover of $(X,d,p)$.\\
	\indent We observe that $d_{k}:=L(\gamma_{k})$ diverges to infinity, since otherwise the orbit $G\tilde{p}$ would be contained in a closed ball $\bar{B}_{R}(\tilde{p})\subset\tilde{X}$.\ Since $\bar{B}_{R}(\tilde{p})$ is compact and $G$ acts discretely on $\tilde{X}$, $G$ must be finite which is a contradiction.\\
	\indent Choose a sequence $x_{k}\in\partial B_{(\frac{1}{2}+S_{N})d_{k}}(p)$, then by Lemma \ref{lem3.3},
	\[d(x_{k},\gamma_{k}(d_{k}/2))\geqslant3S_{N}d_{k}.\]
	There exists a point $y_{k}\in\partial B_{\frac{1}{2}d_{k}}(p)$ on the minimal geodesic from $p$ to $x_{k}$ and $y_{k}$ satisfies
	\begin{align*}
		d(y_{k},\gamma_{k}(d_{k}/2)) & \geqslant d(x_{k},\gamma_{k}(d_{k}/2))-d(x_{k},y_{k})\\
		                             & \geqslant 3S_{N}d_{k}-S_{N}d_{k}
		                             =2S_{N}d_{k}.
	\end{align*}
    Then,
    \begin{align*}
    	\limsup\limits_{r\to\infty}\frac{diam(\partial B_{r}(p))}{r} & \geqslant 	\limsup\limits_{k\to\infty}\frac{d(y_{k},\gamma_{k}(d_{k}/2))}{d_{k}/2}\\
    	& \geqslant \limsup\limits_{k\to\infty}\frac{2S_{N}d_{k}}{d_{k}/2}=4S_{N},
    \end{align*}
    which is a contradiction.\\
    \indent (2) $N=2$.\\
    \indent In this case, $\dim_{ess}(X)=$ 1 or 2.\ If $\dim_{ess}(X)=1$, then $\mathcal{R}_{1}\neq\emptyset$ and $X$ is isometric to $\mathbb{R},\mathbb{R}_{+},S^{1}(r)$ or $[0,l]$ (see Theorem 1.1 in \cite{kitabeppu2016characterization}).\ Thus, we may assume $\dim_{ess}(X)=2$.\ By Theorem \ref{thm2.21} and Theorem \ref{thm2.22}, we know that $(X,d)$ is a 2-dimensional Alexandrov space with curvature $\geqslant0$.\ Then we may go through Gromov's arguments in \cite{gromov1978manifolds}.\\
    \indent Let $(\tilde{X},\tilde{d},\tilde{x})$ (which is also an Alexandrov space with curvature $\geqslant0$) be the universal cover of $(X,d,x)$.\ Notice that $G:=\pi_{1}(X,x)$ acts on $\tilde{X}$ via isometries.\ Without loss of generality, we assume that $\tilde{x}\in\tilde{X}$ is a regular point (i.e., at $\tilde{x}$, the space of directions $\Sigma_{\tilde{x}}$ is isometric to standard $S^{1}$, or equivalently, the tangent cone $C(\Sigma_{\tilde{x}})$ is isometric to $\mathbb{R}^{2}$).\\
    \indent Choose a generating set $\{g_{1},g_{2},...\}$ inductively such that
    \begin{enumerate}
    	\item $\tilde{d}(\tilde{x},g_{1}\tilde{x})\leqslant\tilde{d}(\tilde{x},g\tilde{x})$ for all $g\in G\setminus\{e\}$,
    	\item $g_{k}\in G\setminus\langle g_{1},...,g_{k-1}\rangle$, $k\geqslant2$,
    	\item $\tilde{d}(\tilde{x},g_{k}\tilde{x})\leqslant\tilde{d}(\tilde{x},g\tilde{x})$ for all $g\in G\setminus\{g_{1},...,g_{k-1}\}, k\geqslant2$.
    \end{enumerate}
    Clearly, $\tilde{d}(\tilde{x},g_{k}\tilde{x})\leqslant\tilde{d}(\tilde{x},g_{l}\tilde{x})$ if $k<l$.\ Let $\gamma_{k}$ be the minimal geodesic from $\tilde{x}$ to $g_{k}\tilde{x}$.\ We claim that $\angle(\gamma_{k},\gamma_{l})\geqslant\frac{\pi}{3}$ for $k<l$.\ Otherwise, consider the comparison triangle in $\mathbb{R}^{2}$.\ We get that $\tilde{d}(g_{l}\tilde{x},g_{k}\tilde{x})<\tilde{d}(\tilde{x},g_{l}\tilde{x})$.\ But then,
    \[\tilde{d}(\tilde{x},g_{l}^{-1}g_{k}\tilde{x})<\tilde{d}(\tilde{x},g_{l}\tilde{x}).\] This contradicts our choice of $g_{l}$.\\
    \indent Recall that an equivalent class of a minimal geodesic starting at $\tilde{x}$ is a direction at $\tilde{x}$ and the distance between two classes is the angle between them.\ Since $\Sigma_{\tilde{x}}$ is isometric to $S^{1}$ and $\angle(\gamma_{k},\gamma_{l})\geqslant\frac{\pi}{3}$ for $k\neq l$, the generating set we construct above contains at most 6 elements, i.e.
    \[G=\langle g_{1},...,g_{s}\rangle,\ s\leqslant6.\] Thus we complete the proof.
	\end{proof}
	
	\begin{proof}[Proof of corollary \ref{cor1.2}]
		Let $m_{i}=\text{Vol}(B_{1}(p_{i}))^{-1}\text{vol}_{g_{i}}$ be the normalized measure on $(M_{i}^{n},g_{i},p_{i})$.\ Then by Cheeger-Colding's seminal work \cite{cheeger1997structure, cheeger2000structure}, there exists a Radon measure $m$ on $X$, such that $(M_{i}^{n},g_{i},m_{i},p_{i})\xrightarrow{pmGH}(X,d,m,p)$.\ By the Stability Theorem (Proposition \ref{prop2.5}), $(X,d,m)$ is an RCD($0,n$) space.\ Thus, we get the conclusion directly by Theorem \ref{thm1.1}.
	\end{proof}

\begin{proof}[Proof of Corollary \ref{cor1.3}]
	As in Theorem \ref{thm1.1}, we only need to consider the case $N>2$.\ If $(X,d,m)$ does not split, then by Theorem \ref{thm2.8} and Theorem \ref{thm1.1}, we get the conclusion.\\
	\indent If $(X,d,m)$ splits, then by Theorem \ref{thm2.7}, $(X,d,m)$ is isomorphic to $(X'\times\mathbb{R},d'\times d_{E},m'\times\mathcal{L}^{1})$, where $(X',d',m')$ is an RCD($0,N-1$) space.\ We claim that $X'$ is compact.\ Otherwise $m'(B_{r}(x'))\geqslant Cr$ by Theorem \ref{thm2.9} and hence, $m(B_{r}(x))\geqslant Cr^{2}$ where $x=(x',0)\in X'\times\mathbb{R}$.\ This is a contradiction with our linear volume growth assumption.\\
	\indent Now suppose $G(X)=G(X'\times\mathbb{R})\cong\pi_{1}(X)$ is infinitely generated.\ We apply Lemma \ref{lem3.2} to obtain a sequence of independent generators $\{ g_{1},g_{2},g_{3},...\}$ of $G(X)$ with minimal representative geodesic loops $\gamma_{k}$ of length $d_{k}\to\infty$, and
	\[d(\gamma_{k}(0),\gamma_{k}(d_{k}/2))=d_{k}/2.\]
	Note that $\gamma_{k}$ is of the form $\sigma_{k}^{1}\times\sigma_{k}^{2}$, where $\sigma_{k}^{1}$ is a loop in $X'$ and $\sigma_{k}^{2}$ is a loop in $\mathbb{R}$.\ Thus $\gamma_{k}$ is homotopic to $\sigma_{k}^{1}\times0$ and $[\gamma_{k}]=[\sigma_{k}^{1}\times0]$ in $G(X)$.\ Since $L(\gamma_{k})$ is minimal in the equivalent class, $\gamma_{k}$ must be of the form $\sigma_{k}^{1}\times0$.\\
	\indent Therefore, we get a sequence of loops $\{\sigma_{k}^{1}\}$ on $(X',d')$ with
	\[d'(\sigma_{k}^{1}(0),\sigma_{k}^{1}(d_{k}/2))=d_{k}/2.\] 
	But $X'$ is compact, which is a contradiction to $d_{k}\to\infty$.
\end{proof}

$\mathbf{\boldsymbol{\mathbf{Acknowledgement.}\mathbf{}}}$
The author would like to thank Bobo Hua for helpful suggestions on this research project.\ The author would also like to thank Andrea Mondino for useful comments on an earlier version of this paper. 
	\nocite{*}
	\bibliographystyle{siam}
	\bibliography{ref} 

\end{document}